\documentclass[11pt]{article}

\usepackage[margin=1in]{geometry}
\usepackage{amsmath,amssymb,amsthm}
\usepackage{hyperref}
\usepackage{mathdots}
\usepackage{xcolor}

\theoremstyle{plain}
\newtheorem{theorem}{Theorem}[section]
\newtheorem{lemma}[theorem]{Lemma}

\newtheorem{proposition}[theorem]{Proposition}

\theoremstyle{definition}
\newtheorem{definition}[theorem]{Definition}
\newtheorem{example}[theorem]{Example}

\theoremstyle{remark}
\newtheorem{remark}[theorem]{Remark}

\newcommand{\C}{\mathbb C}

\newcommand{\f}{{\bf f}}
\newcommand{\h}{{\bf h}}
\newcommand{\g}{{\bf g}}
\newcommand{\cc}{{\bf c}}
\newcommand{\dd}{{\bf d}}
\newcommand{\Ab}{{\bf{\mathcal A}}}

\title{On the Coefficients of Hurwitz-Type Matrix Polynomials}

\author{
A.~E. Choque-Rivero\\[1mm]
\small Institute of Physics and Mathematics,\\
\small Michoacan University of Saint Nicholas of Hidalgo,\\
\small Ciudad Universitaria, Edificio C3-A,\\
\small Morelia 58030, Michoacan, Mexico
}

\date{}

\begin{document}

\maketitle

\begin{abstract}
Consider the matrix polynomial
$
\mathbf f_n(z)=I_qz^n+A_1z^{\,n-1}+\cdots+A_n,
$
where \(A_j\in\mathbb C^{q\times q}\). Write
$
\mathbf f_n(z)=
\mathbf h_n(z^2)+z\,\mathbf g_n(z^2).
$
A matrix polynomial \(\mathbf f_n\) is called a
\emph{Hurwitz-type matrix polynomial} if, for \(n=2m\),
\(\mathbf g_n(z)\mathbf h_n(z)^{-1}\), and for \(n=2m+1\),
\(\mathbf h_n(z)\bigl(z\mathbf g_n(z)\bigr)^{-1}\),
admit finite continued fraction expansions with positive definite
coefficients.

We derive explicit formulas for the coefficients of Hurwitz-type
matrix polynomials in terms of orthogonal matrix polynomials, Markov
parameters, and Schur complements. We introduce the associated block
Hurwitz matrix and establish determinant identities relating it to the
corresponding block Hankel matrices. Finally, we settle a conjecture
concerning the positivity of the determinants of the coefficient
matrices of Hurwitz-type matrix polynomials. We prove that this
property holds for degrees at most three, but fails in degree four by
means of an explicit counterexample.
\end{abstract}

\medskip
\noindent\textbf{Keywords:}
Hurwitz-type matrix polynomials;
Hurwitz matrix polynomials;
orthogonal matrix polynomials;
Schur complements;
Markov parameters.

\medskip
\noindent\textbf{2020 Mathematics Subject Classification:}
15A15, 33C45, 30E05, 47A56, 15A24.

\section{Introduction}\label{sec1}
The set \(\mathbb C^{q\times q}\) denotes the space of all complex
\(q\times q\) matrices.
We write \(0_q\) and \(I_q\) for the zero and identity
matrices, respectively, and omit the indices whenever the dimensions
are clear from the context.
Let \(n\) and \(q\) be natural numbers. 

A scalar polynomial is called a Hurwitz polynomial if all of its roots
lie in the left half of the complex plane
\cite{post,datta,furhmann,holtz,ab2018,jaro,jury}.
Every real Hurwitz polynomial necessarily has positive coefficients.
This  leads to the question of whether an analogous property
holds for matrix polynomials, namely, whether the determinants of the
coefficient matrices are positive.

To address this question, in the present work we investigate a class
of matrix polynomials, introduced in \cite{abH}, belonging to the
class of lambda--matrices \cite[Page~95]{macd}, \cite{dennis},
also called matrix polynomials \cite{lan01}, of the form
\begin{equation}
\mathbf f_n(z)
=
I_q z^n+A_1z^{n-1}+\cdots+A_n,
\label{hup01}
\end{equation}
where \(A_j\in\mathbb C^{q\times q}\) and \(z\) is a complex variable.

Following
\cite{heinig,lan01,lerer1,martins,resende,shieh},
we call the matrix polynomial \(\mathbf f_n\)
Hurwitz if its determinant
\(\det\mathbf f_n\) is a Hurwitz polynomial.

Since the leading coefficient of \(\mathbf f_n\) is \(I_q\), the
matrix polynomial~\eqref{hup01} has degree \(n\). Every matrix
polynomial of degree \(n\) admits the decomposition
\begin{equation}
\mathbf f_n(z)
=
\mathbf h_n(z^2)+z\,\mathbf g_n(z^2),
\label{hup02}
\end{equation}
where
\begin{align}
\h_n(z) &:= \begin{cases}
I_q z^m + A_{2} z^{m-1} + \ldots + A_{2m}, & n = 2m, \\[0.3em]
A_{1} z^m + A_{3} z^{m-1} + \ldots + A_{2m+1}, & n = 2m+1,
\end{cases} \label{hn} \\[0.5em]
\g_n(z) &:= \begin{cases}
A_{1} z^{m-1} + A_{3} z^{m-2} + \ldots + A_{2m-1}, & n = 2m, \\[0.3em]
I_q z^m + A_{2} z^{m-1} + \ldots + A_{2m}, & n = 2m+1.
\end{cases} \label{gn}
\end{align}
We now recall the definition of Hurwitz-type matrix polynomials,
which is formulated in terms of the decomposition~(2).
For \(A,B\in\mathbb C^{q\times q}\), with \(B\) invertible,
we adopt the notation
\[
\frac{A}{B}:=AB^{-1}.
\]
 \begin{definition}\label{defhur}
The $q\times q$ matrix polynomial $\f_n$ in (\ref{hup01}) is called
a Hurwitz-type matrix polynomial  (HTM) if there exist two sequences of
positive definite matrices, $(\cc_{k})_{k=0}^{m-1}$ and $(\dd_{k})_{k=0}^{m-1}$ 
(resp.  $(\cc_{k})_{k=0}^m$ and $(\dd_{k})_{k=0}^{m-1}$) such that, for $n=2m$,
\begin{align}
 \frac{\g_n (z)}{\h_n(z)}=
 &\cfrac{I_q}{z\cc_{0}+\cfrac{I_q}{\dd_{0}
 +\cfrac{I_q}{\ddots 
 +\dd_{m-2}+
 \cfrac{I_q}{
+z\cc_{m-1}+\dd_{m-1}^{-1}}}}}, \label{cc13aa}
\end{align}
for all $z\in \C$ with $\det \h_n(z)\neq0$ and, for $n=2m+1$,
\begin{align}
\frac{\h_n (z)}{z \g_n(z)}=
&\cfrac{I_q}{z\cc_{0}+\cfrac{I_q}{\dd_{0}+\cfrac{I_q}{\ddots \,
 +z\cc_{m-1}+\cfrac{I_q}{\dd_{m-1}
+z^{-1}\cc_{m}^{-1} }}}} \label{cc23aa}
 \end{align}
 for all $z\in {\mathbb C} \setminus\{0\}$ with $\det \g_n(z)\neq0$.
\end{definition}
The continued-fraction representation in Definition~\ref{defhur} is
motivated by Theorem~\cite[Chapter~XV, Theorem~15]{gant0}.

For scalar real polynomials of degree at most two, positivity of the
coefficients is both necessary and sufficient for the Hurwitz
property. This characterization, however, no longer holds for matrix
polynomials.

Indeed, positivity of the determinants of the coefficient matrices
alone does not imply the Hurwitz property. For instance,
\(\mathbf f_1(z)=I_2z-A_1\), where \(A_1=I_2\).
Then
$
\det A_1=1>0,
\, 
\det\mathbf f_1(z)=(z-1)^2,
$
which is not Hurwitz.

The notion of Hurwitz-type matrix (HTM) polynomials was introduced
in \cite{abH}. Among the main results of that work is a
representation of HTM polynomials in terms of orthogonal matrix
polynomials on \([0,\infty)\) and the corresponding second-kind
matrix polynomials.

Subsequently, the fact that every HTM polynomial is Hurwitz was
suggested and partially established using Bezoutians in
\cite{zhan1}.

Further properties of HTM polynomials were discussed in
\cite{abcomment}. In particular, the coprimeness of the polynomial
pairs arising in the Hurwitz decomposition and their connection with
the truncated matrix Stieltjes moment problem
\cite{dyuth,dyu01,dyu2009,fkm02,fkm03}
were analyzed.

Later, in \cite{ab2026}, explicit Bezoutian representations for
Hurwitz-type matrix (HTM) polynomials were obtained, together with a
complete proof that every HTM polynomial is Hurwitz. The paper also
introduced a degree-doubling transformation associating an arbitrary
matrix polynomial with an HTM polynomial, yielding new criteria for
Hurwitz stability. In the same work, it was conjectured
(\cite[Remark~13]{ab2026}) that every coefficient matrix of an HTM
polynomial has positive determinant.

More recently, these results were applied in \cite{choqueTAC} to the
robust Hurwitz stability of parameter-dependent matrix polynomials,
yielding explicit Bezoutian-based criteria for multivariable
state-feedback stabilization.

Motivated by the above conjecture, the main objective of the present
paper is to investigate this conjecture and to describe the coefficient
matrices explicitly in terms of the associated moment-theoretic
quantities.

An important application of HTM polynomials arises in the analysis of
Hurwitz stability for linear time-invariant systems of the form
\begin{equation}\label{sys2}
\dot y=\widehat A y, \qquad \widehat A=
\begin{pmatrix}
-A_1&-A_2&\cdots&-A_{n-1}&-A_n\\
I_q&0_q&\cdots&0_q&0_q\\
0_q&I_q&\cdots&0_q&0_q\\
\vdots&\vdots&\ddots&\vdots&\vdots\\
0_q&0_q&\cdots&I_q&0_q
\end{pmatrix},
\end{equation}
where \(A_j\in\mathbb C^{q\times q}\) and
\(y\in\mathbb C^{nq}\).

The matrix \(\widehat A\) is the block companion linearization of the
matrix polynomial \eqref{hup01}. Consequently,
\begin{equation}\label{charpoly}
\det\!\bigl(zI_{nq}-\widehat A\bigr)=\det\!\bigl(\mathbf f_n(z)\bigr).
\end{equation}
Hence, the spectrum of \(\widehat A\) coincides, counting algebraic
multiplicities, with the set of zeros of
\(\det\!\bigl(\mathbf f_n(z)\bigr)\); see
\cite[p.~14]{gohberg}.

Another application of HTM polynomials is the design of
multi-input state-feedback controllers for linear MIMO systems such that every trajectory starting in a
neighborhood of the origin reaches the origin in finite time.
The scalar case was investigated in
\cite{choque5,choque6}.

\noindent\textbf{Main contributions.}
We establish the following results.
\begin{itemize}
\item[(a)]
We derive explicit formulas for the coefficient matrices of HTM
polynomials in terms of orthogonal matrix polynomials,
second-kind matrix polynomials and Schur complements. 
We also show how to reconstruct the associated Markov
parameters from the polynomial coefficients.

\item[(b)]
We introduce a block Hurwitz matrix associated with an HTM
polynomial and establish explicit factorizations and determinant
identities. As an application, we settle the conjecture from
\cite[Remark~13]{ab2026} by proving positivity of the coefficient
determinants for HTM polynomials of degree at most three and by
constructing a counterexample in degree four.

\item[(c)]
We present several examples illustrating the theory,
including the computation of orthogonal matrix polynomials, Markov
parameters, block Hurwitz matrices, and HTM polynomials
constructed from a Stieltjes measure.
\end{itemize}

The paper is organized as follows. In Section~\ref{sec2}, we review
the basic results on Stieltjes orthogonal matrix polynomials,
second-kind matrix polynomials, and the corresponding auxiliary
matrices.

In Section~\ref{sec3}, we derive explicit relations between the
Markov parameters and the coefficients of HTM polynomials.

Section~\ref{sec4} contains explicit representations of the
coefficient matrices of HTM polynomials in terms of orthogonal matrix
polynomials, second-kind matrix polynomials, Schur complements, and
Hankel block matrices.

In Section~\ref{sec5}, we give several examples illustrating the
theory, including the construction of HTM polynomials from Stieltjes
measures.

In Section~\ref{sec6}, we study the determinants of the coefficient
matrices. We prove positivity for HTM polynomials of degree at most
three and give a counterexample for degree four.

Finally, in Section~\ref{sec7}, we introduce the associated block
Hurwitz matrix, establish its relation with the Markov parameters and
Hankel block matrices, and derive determinant identities for the
leading block Hurwitz determinants.

\section{Preliminaries} \label{sec2}
In this section, we recall from \cite{ablaa,abH} several auxiliary
matrices together with two families of orthogonal matrix polynomials
and their corresponding second-kind matrix polynomials.

\subsection{Markov parameters associated with HTM polynomials}
 The following result, reproduced from
\cite[Lemma~2.9]{abH}, shows that the rational matrix functions
$
\g_n/\h_n
$
and
$
\h_n/(z\g_n)
$
admit Laurent expansions at infinity whose coefficients are the
associated Markov parameters.

\begin{remark}\label{rem2.1}
 Let $\h_n$ and $\g_n$ be defined as in (\ref{hn}) and (\ref{gn}) and let $\f_n$ be as in (\ref{hup02}).
  Let $|z_0|\geq \max \{|\lambda|: \det \h_n(\lambda)=0\}$
  (resp.\ $|z_0|\geq \max \{|\lambda|: \det \g_n(\lambda)=0\}$). For $|z|>|z_0|$,
the power series expansion of $\g_n/\h_n$ (resp.\ $\h_n/\g_n$) in
negative powers of $z$,
\begin{align}
\frac{\g_n(z)}{\h_n(z)}&=\frac{s_0}{z}-\frac{s_1}{z^2}+\ldots+(-1)^{n}\frac{s_n}{z^{n+1}}+\ldots\,,
\ \
n=2m,\label{hup09}\\
\frac{\h_n(z)}{z\g_n(z)}&=\frac{s_0}{z}-\frac{s_1}{z^2}+\ldots+(-1)^{n}\frac{s_n}{z^{n+1}}+\ldots\,,
\ \ n=2m+1,
 \label{hup10}
 \end{align}
\end{remark}

The rational matrix functions
\[
\frac{\g_{2m}(-z)}{\h_{2m}(-z)}
\quad\text{and}\quad
\frac{\h_{2m+1}(-z)}{(-z)\g_{2m+1}(-z)}
\]
are the extremal solutions of the truncated Stieltjes matrix moment
 problem; see the proof of Theorem~6.1 in \cite{abH}.

The corresponding Markov parameters
\((s_j)_{j\ge0}\)
are precisely the Stieltjes moments of the measure
\(\sigma\), namely,
\begin{equation}\label{ss-j}
s_j=\int_{[0,\infty)}t^j\,\sigma(dt),\qquad j\ge0.
\end{equation}
where \(\sigma\) is a nonnegative Hermitian \(q\times q\) measure on
\([0,\infty)\).

Associated with the Markov parameters \((s_j)_{j\ge0}\)
are the block Hankel matrices \(H_{1,j}\) and \(H_{2,j}\), defined by
\begin{equation} \label{61}
H_{1,j}:=
\begin{pmatrix}
s_{0} & s_{1}& \ldots & s_{j}\\
s_{1} & s_{2}& \ldots & s_{j+1}\\
\vdots  & \vdots  & \vdots & \vdots\\
s_{j} & s_{j+1} & \ldots & s_{2j}
\end{pmatrix},
H_{2,j}:=\begin{pmatrix}
s_{1} & s_{2}& \ldots & s_{j+1}\\
s_{2} & s_{3}& \ldots & s_{j+2}\\
\vdots  & \vdots  & \vdots & \vdots\\
s_{j+1} & s_{j+2} & \ldots & s_{2j+1}
\end{pmatrix}.
\end{equation}

 \begin{definition}\label{posseq}
  For all $m\in\mathbb{N}\cup {0}$, a sequence
\((s_j)_{j=0}^{2m+1}\) of complex \(q\times q\) matrices is called a
\emph{Stieltjes positive sequence} if the corresponding block Hankel
matrices \(H_{1,m}\) and \(H_{2,m}\) (resp.\ \(H_{1,m}\) and
\(H_{2,m-1}\) for \(m\ge1\)) are positive definite.
\end{definition}
Throughout the paper, we assume that all moment sequences under
consideration are Stieltjes positive sequences.
 
Let
\begin{align}
Y_{1,j} &:= y_{[j,\,2j-1]}, \qquad Y_{2,j} := y_{[j+1,\,2j]}, \quad 1 \leq j \leq n,
\label{81}
\end{align}
where
\begin{equation}
y_{[j,k]}:=\left(s_j,\, s_{j+1},\, \ldots,\, s_k\right)^*,\qquad 0\le j\le k,\label{65}
\end{equation}
with the convention that $y_{[j,k]}:=0_q,\, j>k.$

Furthermore, let
\(\widehat H_{1,j}\)
(resp.\ \(\widehat H_{2,j}\))
denote the Schur complement of the block
\(H_{1,j-1}\)
in
\(H_{1,j}\)
(resp.\ of
\(H_{2,j-1}\)
in
\(H_{2,j}\));
see~\cite{ouellette} and
\cite[Equalities~(2.5) and~(2.6)]{ablaa}.
These matrices are given by
\begin{align}
\widehat H_{1,0}
&:=s_0,\quad  \widehat H_{1,j} :=s_{2j}-Y_{1,j}^* H_{1,j-1}^{-1} Y_{1,j},\quad j\ge1, \label{hh1j}
\\
\widehat H_{2,0}
&:=s_1,\quad \widehat H_{2,j}:=s_{2j+1}-Y_{2,j}^*H_{2,j-1}^{-1}Y_{2,j},\quad j\ge1.
\label{hh2j}
\end{align}
Since the moment sequence is Stieltjes positive, the block Hankel
matrices \(H_{1,j}\) and \(H_{2,j}\), together with their Schur
complements \(\widehat H_{1,j}\) and \(\widehat H_{2,j}\), are
Hermitian positive definite.

\subsection{Stieltjes orthogonal matrix polynomials} \label{sub1-2}
 Let
  $R_{j}:\mathbb{C}\to \mathbb{C}^{(j+1)q\times (j+1)q}$  be given by
 \begin{align}
\label{52}R_{j}(z)&:=(I_{(j+1)q}-zT_{j})^{-1},\qquad j\geq 0,
\end{align}
 with
\begin{equation}
T_{0}:=0_q, \ \  T_{j}:=\left(
\begin{array}{cc}
0_{q\times jq} & 0_q\\
I_{jq} & 0_{jq\times q}\\
                \end{array}
         \right),\qquad j\geq 1.\label{56}
 \end{equation}
 Note that for each $j \in \mathbb{N}_0$, the matrix-valued function $R_{j}$ admits the representation
\begin{equation}
R_{j}(z)=\left(
         \begin{array}{cccccc}
           I_q & 0_q & 0_q & \ldots & 0_q & 0_q \\
           zI_q &  I_q  & 0_q & \ldots & 0_q & 0_q\\
           z^2 I_q & z I_q & I_q & \ldots & 0_q & 0_q \\
 \vdots & \vdots & \vdots & \iddots & \vdots & \vdots \\
           z^jI_q & z^{j-1}I_q & z^{j-2}I_q &  \ldots& zI_q & I_q
         \end{array}
       \right).\label{o1.6}
\end{equation}
Let
\begin{align}
v_{0}
&:= I_q,
\qquad
v_{j}
:=
\begin{pmatrix}
I_q\\
0_{jq\times q}
\end{pmatrix},
\qquad
j\in\mathbb N.
\label{60a}
\end{align}

Furthermore, let \(y_{[j,k]}\) be defined by \eqref{65}. For
\(1\le j\le n\), define
\begin{align}
u_{1,0}
&:=0_q,
\qquad
u_{1,j}
:=
\begin{pmatrix}
0_q\\
-y_{[0,j-1]}
\end{pmatrix},
\qquad
u_{2,j}
:=
-y_{[0,j]}.
\label{66}
\end{align}
We now recall the notion of monic left orthogonal matrix
polynomials associated with Hankel positive definite moment
sequences (see \cite[Definition~3.2]{abmad}).
To this end, we first recall the standard notions of degree and
leading coefficient of a matrix polynomial.
For a complex $p \times q$ matrix polynomial $P(z) = \sum_{j=0}^\infty z^j A_j$  
with coefficients $A_j \in \mathbb{C}^{p\times q}$, we set  
$
Z^{[P]}_{n} := [A_0, A_1, \dotsc, A_n], \quad n \in \mathbb{N}_0,
$
and define  
$
\deg P := \sup\{ j \in \mathbb{N}_0 \mid A_j \neq 0_{p\times q} \},
$
taking $\deg P = -\infty$ if $P \equiv 0_{p\times q}$.  
When $\deg P = k \ge 0$, the coefficient $A_k$ is the \emph{leading coefficient} of $P$.

\begin{definition}\label{def3.2A}
Let \(\kappa\in\mathbb N_0\cup\{\infty\}\), and let
\((\widetilde s_j)_{j=0}^{2\kappa}\) be a sequence of complex
\(q\times q\) matrices. For each \(n\in\mathbb N_0\), define the
block Hankel matrix
\[
H_n^{(\widetilde s)}
:=
(\widetilde s_{j+k})_{j,k=0}^{n}.
\]

The sequence
\((\widetilde s_j)_{j=0}^{2\kappa}\)
is called \emph{Hankel positive definite} if
\(H_n^{(\widetilde s)}\) is Hermitian positive definite for every
\(n\le\kappa\).

A sequence \((P_k)_{k=0}^{\kappa}\) of complex
\(q\times q\) matrix polynomials is called a
\emph{monic left orthogonal system of matrix polynomials with respect
to \((\widetilde s_j)_{j=0}^{2\kappa}\)} if the following conditions
are fulfilled:
\begin{itemize}
\item[(I)]
\(\deg P_k=k\) for all \(k\in\mathbb Z_{0,\kappa}\).
\item[(II)]
The leading coefficient of \(P_k\) is \(I_q\).
\item[(III)]
\[
Z_n^{[P_j]}
H_n^{(\widetilde s)}
\bigl(Z_n^{[P_k]}\bigr)^*
=
0_{q\times q},
\]
for all \(j\neq k\), where
\(n=\max\{j,k\}\).
\end{itemize}
\end{definition}
In what follows, Definition~\ref{def3.2A} is applied to the two
moment sequences
$(\widetilde s_j)~=~(s_j)$,
and
$(\widetilde s_j)~=~(s_{j+1})$,
which give rise to the orthogonal matrix polynomial systems
\((P_{1,j})\) and \((P_{2,j})\), respectively.

\begin{definition}
\label{def2p}
  Let $(s_{j})_{j=0}^{2n}$ (resp.\ $(s_{j})_{j=0}^{2n+1}$) be a Stieltjes positive sequence. Furthermore, let 
$v_j$, $u_{1,j}$, $u_{2,j}$ and $R_j$ be as in 
(\ref{60a}),  (\ref{66})  and (\ref{o1.6}), respectively. Let
\begin{equation}
 P_{1,0}(z):=I_q, \ \ Q_{1,0}(z):=0_q, \ \ P_{2,0}(z):=I_q, \ \
Q_{2,0}(z):=s_0. \label{pq00}
\end{equation}
For $j\geq 1$,  set
\begin{align}
P_{1,j}(z):=&(-Y_{1,j}^* H_{1,j-1}^{-1},\,I_q)R_j(z)v_j, \label{p1j}\\
P_{2,j}(z):=&(-Y_{2,j}^*  H_{2,j-1}^{-1},\,I_q)R_j(z)v_j, \label{p2j}\\
Q_{1,j}(z):=&-(-Y_{1,j}^*
H_{1,j-1}^{-1},\,I_q)R_j(z)u_{1,j},\label{q1j}\\
Q_{2,j}(z):=&-(-Y_{2,j}^*  H_{2,j-1}^{-1},\,I_q)R_j(z)u_{2,j}. \label{qj2}
\end{align}
The matrix polynomials $Q_{1,j}$ and $Q_{2,j}$
 are called second-kind polynomials with respect to $P_{1,j}$ and  $P_{2,j}$,
respectively.
\end{definition}
These matrix polynomials appear to have been introduced for the first time in \cite{dyuth}.

\vskip2mm

The following proposition, reproduced from
\cite[Proposition~4.9(b)]{ablaa},
establishes
  relationships among the matrix polynomials \( P_{1,j} \), \( Q_{1,j} \), \( P_{2,j} \), \( Q_{2,j} \), and the corresponding Schur complements.
\begin{proposition} \label{prop4.3}
 Let $P_{k,j}$, $Q_{k,j}$ be as in Definition
\ref{def2p}, and let $\widehat H_{k,j},\, k=1,2$ be  as in
\eqref{hh1j}, \eqref{hh2j}.
 Then the following identities hold:
\begin{align}
 Q_{2,j}^*(\bar z) Q_{2,j}^{*^{-1}}(0)=&I_q+z \sum_{l=0}^j Q_{1,l}^*(\bar z)
  \widehat H_{1,l}^{-1}P_{1,l}(0), \quad j\geq0, \label{B11j}
  \\
Q_{1,j}^*(\bar z) P_{1,j}^{*^{-1}}(0)=&-\sum_{l=0}^{j-1}
Q_{2,l}^*(\bar z)
\widehat{H}_{2,l}^{-1}Q_{2,l}(0), \quad j\geq1, \label{B12j}
\\
 P_{2,j}^*(\bar z) Q_{2,j}^{*^{-1}}(0)=& \sum_{l=0}^{j} P_{1,l}^*(\bar z)
 \widehat H_{1,l}^{-1}P_{1,l}(0),\quad j\geq0, \label{B21j}
\end{align}
and
\begin{equation}
 P_{1,j}^*(\bar z) P_{1,j}^{*^{-1}}(0)=I_q-z\sum_{l=0}^{j-1} P_{2,l}^*(\bar z)
 \widehat{H}_{2,l}^{-1}Q_{2,l}(0), \quad j\geq1.\label{B22j}
\end{equation}
\end{proposition}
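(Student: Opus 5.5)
The plan is to prove the four identities of Proposition~\ref{prop4.3} jointly by induction on $j$. The induction rests on the three-term relations that the Stieltjes orthogonal systems satisfy between the two families; this is the approach of \cite{ablaa}.

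\textbf{Step 1: base cases.} For $j=0$, \eqref{pq00} gives $Q_{2,0}=s_0$, $P_{2,0}=P_{1,0}=I_q$, $Q_{1,0}=0_q$ and $\widehat H_{1,0}=s_0$. Then \eqref{B11j} reads $I_q=I_q+z\cdot 0$, and \eqref{B21j} reads $s_0^{-1}=I_q\,s_0^{-1}\,I_q$. Both hold trivially.

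For $j=1$, \eqref{p1j} and \eqref{q1j} give explicitly $P_{1,1}(z)=zI_q-s_1s_0^{-1}$ and $Q_{1,1}=s_0$. One checks \eqref{B12j} and \eqref{B22j} directly, using $\widehat H_{2,0}=s_1$.

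\textbf{Step 2: mixed recurrences.} The key intermediate results are recurrences linking the two families, of the form
\[
P_{2,j}(z)\,\bigl(Q_{2,j}(0)\bigr)^{-1}\ \text{-type relations:}\quad
P_{1,j+1}(z)=zP_{2,j}(z)-\widehat H_{1,j+1}\,(\cdots)\,P_{1,j}(z),
\]
together with an analogous relation expressing $P_{2,j}$ through $P_{1,j}$ and $P_{2,j-1}$, and the same relations for the $Q$'s.

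I would derive these from the definitions \eqref{p1j}--\eqref{qj2}. Write $R_j(z)v_j=(I_q,zI_q,\dots,z^jI_q)^{T}$, so that $P_{1,j}$ is the monic polynomial whose coefficient row $(-Y_{1,j}^*H_{1,j-1}^{-1},I_q)$ annihilates $H_{1,j}$ except in the last block column. The block $(H_{1,j}$ versus $H_{2,j-1})$ structure, i.e.\ the shift $s_k\mapsto s_{k+1}$, then shows the following. The polynomial $zP_{2,j-1}(z)$ and the polynomial $P_{1,j}(z)$ differ by a polynomial of lower degree that is orthogonal to all of $1,\dots,z^{j-1}$ except the constant. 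This forces the difference to be a multiple of $P_{1,j-1}$. Evaluating at $z=0$ fixes that multiple in terms of $P_{1,j}(0)$ and $P_{1,j-1}(0)$.

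Taking Schur complements via the block LDU factorization of $H_{k,j}$ identifies the normalizing constants with $\widehat H_{1,l}$ and $\widehat H_{2,l}$.

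\textbf{Step 3: telescoping.} The recurrences take the form
\[
P_{2,j}^*(\bar z)Q_{2,j}^{*-1}(0)-P_{2,j-1}^*(\bar z)Q_{2,j-1}^{*-1}(0)=P_{1,j}^*(\bar z)\widehat H_{1,j}^{-1}P_{1,j}(0),
\]
and similarly for the other three identities. Summing these differences from the base case gives \eqref{B11j}--\eqref{B22j}.

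The $Q$-identities follow from the $P$-identities because the $Q$'s satisfy the same recurrences. This holds since $u_{1,j}$ and $u_{2,j}$ enter linearly with the same row vectors; only the initial values differ.

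\textbf{Main obstacle.} I expect the hardest part to be Step~2: pinning down the exact coefficients, namely that the coefficient is $\widehat H_{1,j}^{-1}P_{1,j}(0)$ rather than some other normalization. I would first verify the coefficient formula for $j=1,2$ by hand. I would then prove it in general by pairing both sides of the recurrence against $H_{1,j}$, which isolates the Schur complement in the last block entry. A needed fact is the invertibility of $P_{1,j}(0)$ and $Q_{2,j}(0)$. I would obtain it from the positivity of $H_{2,j}$: a zero of $\det P_{1,j}(0)$ would contradict positive definiteness of the Stieltjes data.
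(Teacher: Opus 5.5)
The paper gives no proof of this proposition; it quotes it from earlier work. Your plan therefore has to stand on its own. The skeleton (two mixed recurrences between the families, then telescoping) is viable, but there are two genuine problems.

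\textbf{First problem: the transfer to the $Q$-identities is wrong as stated.} The claim that the $Q$'s obey the same recurrences, with only the initial values changed, is false. By \eqref{q1j} and \eqref{qj2} (read $\int\cdot\,\sigma(dt)$ as the moment functional $t^k\mapsto s_k$ if you prefer),
\begin{equation*}
Q_{1,j}(z)=\int\frac{P_{1,j}(z)-P_{1,j}(t)}{z-t}\,\sigma(dt),\qquad
Q_{2,j}(z)=\int\frac{zP_{2,j}(z)-tP_{2,j}(t)}{z-t}\,\sigma(dt).
\end{equation*}
So the two families use different second-kind transforms. On the coefficient side this is the shift $u_{1,j}=\begin{pmatrix}0_q\\ u_{2,j-1}\end{pmatrix}$.

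Apply these transforms to the correct recurrences $P_{1,j}=zP_{2,j-1}+C_j'P_{1,j-1}$ and $P_{2,j}=P_{1,j}+C_jP_{2,j-1}$. You get $Q_{1,j}=Q_{2,j-1}+C_j'Q_{1,j-1}$ and, since $\int P_{1,j}\,d\sigma=0$ for $j\ge1$, also $Q_{2,j}=zQ_{1,j}+C_jQ_{2,j-1}$. The factor $z$ migrates between the families. That is exactly why \eqref{B11j} reads $I_q+z\sum\cdots$ while \eqref{B21j} has no $z$, and why \eqref{B12j} has no $z$ while \eqref{B22j} has one.

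Identical recurrences would make $Q_{2,1}$ constant, like $Q_{1,1}=s_0$. In fact $Q_{2,1}(z)=zs_0+s_1-s_2s_1^{-1}s_0$. The simplest repair avoids a second induction. Once \eqref{B21j} and \eqref{B22j} are known, apply the two left-linear transforms to their adjoint forms and use $\int P_{1,l}\,d\sigma=\delta_{l0}s_0$. This gives \eqref{B11j} and \eqref{B12j} in one line.

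\textbf{Second problem: the constants, which you defer as the main obstacle, are the actual content of the statement.} The telescoped differences match the summands of \eqref{B21j} and \eqref{B22j} only if $C_j=Q_{2,j}(0)Q_{2,j-1}(0)^{-1}$ and
\begin{equation*}
Q_{2,j}(0)P_{1,j}(0)^*=\widehat H_{1,j},\qquad P_{1,j}(0)Q_{2,j-1}(0)^*=-\widehat H_{2,j-1}.
\end{equation*}
Evaluating at $z=0$ only yields $C_j'=P_{1,j}(0)P_{1,j-1}(0)^{-1}$. For $C_j$, integrate the second recurrence against $\sigma$, using $Q_{2,l}(0)=\int P_{2,l}\,d\sigma$.

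For the two normalizations, set $\langle p,r\rangle_\sigma:=\int p(t)\,\sigma(dt)\,r(t)^*$, and define $\langle\cdot,\cdot\rangle_{t\sigma}$ likewise. Write $P_{1,j}(t)=P_{1,j}(0)+tF_j(t)$ with $F_j$ monic of degree $j-1$.
\begin{itemize}
\item Since $P_{2,j}-P_{1,j}$ has degree $<j$, we have $\widehat H_{1,j}=\langle P_{2,j},P_{1,j}\rangle_\sigma=Q_{2,j}(0)P_{1,j}(0)^*+\langle P_{2,j},F_j\rangle_{t\sigma}$. The last term vanishes because $\deg F_j<j$.
\item We also have $0=\langle P_{1,j},P_{2,j-1}\rangle_\sigma=P_{1,j}(0)Q_{2,j-1}(0)^*+\langle F_j,P_{2,j-1}\rangle_{t\sigma}$. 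Here the last term equals $\widehat H_{2,j-1}$.
\end{itemize}
These two identities also give invertibility of $P_{1,j}(0)$ and $Q_{2,j}(0)$, so no separate positivity argument is needed. They further give $C_j'=-\widehat H_{2,j-1}\widehat H_{1,j-1}^{-1}$ and $C_j=-\widehat H_{1,j}\widehat H_{2,j-1}^{-1}$. So no $\widehat H_{1,j+1}$ enters your displayed recurrence.

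\textbf{A smaller correction.} $zP_{2,j-1}-P_{1,j}$ is $\sigma$-orthogonal to $1,\dots,z^{j-2}$, i.e.\ to every power except the top one. It is not orthogonal to every power except the constant, as you wrote. Only the former forces it to be a left multiple of $P_{1,j-1}$.
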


\vskip1mm

The following result is fundamental for the developments in this paper.
 It provides a representation of an HTM polynomial \(\mathbf f_n\) in
terms of the matrix polynomials
\(P_{1,j}\), \(Q_{1,j}\), \(P_{2,j}\), and \(Q_{2,j}\).
This representation first appeared in \cite{abH}
(line~4 from the bottom of page~78 and line~8 from the bottom of
page~79).

\begin{proposition}\label{prop1.8}
For $n\geq1$, the matrix polynomial  $\f_n$ is an HTM polynomial if and only if $\f_n$ has the following
 representation:
 \begin{equation}
 \f_n(z)=\left\{
 \begin{array}{cl}
   (-1)^m(P_{1,m}^*(-\bar z^2)-z\, Q_{1,m}^*(-\bar z^2)), & n=2m, \\
  (-1)^m(Q_{2,m}^*(-\bar z^2)+z\,  P_{2,m}^*(-\bar z^2)), & n=2m+1,
 \end{array}
 \right. \label{hupfqp}
\end{equation}
where $P_{k,m}$ and  $Q_{k,m}$ for $k=1,2$ are polynomials defined in Definition \ref{def2p}.
The matrix polynomials
$P_{k,m}$ and  $Q_{k,m}$
are constructed from the Markov parameters associated with \(\mathbf f_n\).
\end{proposition}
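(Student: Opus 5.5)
The plan is to obtain Proposition~\ref{prop1.8} by transporting the known continued-fraction / orthogonal-polynomial correspondence for the truncated Stieltjes matrix moment problem. I treat $n=2m$ in detail; the odd case is parallel. The argument has two directions.

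\emph{Necessity.} Assume $\f_n$ is HTM. By Remark~\ref{rem2.1}, $\g_n/\h_n$ has a Laurent expansion at infinity with Markov parameters $(s_j)$. Positivity of the continued-fraction coefficients $\cc_k,\dd_k$ will be used to show that these Markov parameters form a Stieltjes positive sequence in the sense of Definition~\ref{posseq}. For this I would invoke the standard correspondence (as in \cite{abH,ablaa}) between Stieltjes continued fractions and Schur complements: the coefficients are expressed through $\widehat H_{1,k}$ and $\widehat H_{2,k}$, e.g.\ $\cc_k$ through products involving $P_{1,k}(0)$, $\widehat H_{1,k}^{-1}$ and $Q_{2,k}(0)$. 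Their positive definiteness then propagates inductively to $H_{1,j}$ and $H_{2,j}$ via the Schur-complement factorization.

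Next, the $k$-th convergent of the continued fraction, evaluated at $-z$, is a ratio of matrix polynomials satisfying a three-term recurrence. Proposition~\ref{prop4.3} supplies exactly the same recurrences for $P_{1,j},Q_{1,j},P_{2,j},Q_{2,j}$ at the level of adjoints $P^*(\bar z)$:
\begin{itemize}
\item \eqref{B22j} and \eqref{B12j} give the step from index $(2,j-1)$ to $(1,j)$;
\item \eqref{B21j} and \eqref{B11j} give the step from $(1,j)$ to $(2,j)$.
\end{itemize}
Matching initial data \eqref{pq00} and using induction on $m$, I would show
\[
\frac{\g_n(-z)}{\h_n(-z)} = -\,Q_{1,m}^*(\bar z)\,P_{1,m}^{*}(\bar z)^{-1}.
\]
The sign bookkeeping under $z\mapsto -z$ is what produces the $(-1)^m$ and the sign in front of $Q$. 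Since both numerator and denominator are determined up to a common right factor, I would compare leading coefficients. $\h_n$ is monic of degree $m$, while $P_{1,m}^*(-\bar z)$ has leading coefficient $(-1)^m I_q$. This forces $\h_n(z)=(-1)^mP_{1,m}^*(-\bar z)$ and $\g_n(z)=-(-1)^mQ_{1,m}^*(-\bar z)$. Substituting $z^2$ for $z$ gives \eqref{hupfqp}.

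The main obstacle I expect is justifying that the right factor is trivial, i.e.\ that the continued fraction determines numerator and denominator uniquely. I would handle this by the right coprimeness of $(P_{1,m},Q_{1,m})$, which follows from the Christoffel--Darboux/Bezout-type identities implicit in Proposition~\ref{prop4.3}, together with monicity.

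\emph{Sufficiency.} Suppose $\f_n$ has the form \eqref{hupfqp} with $P,Q$ built from a Stieltjes positive sequence. I would run the recurrences of Proposition~\ref{prop4.3} backwards to unfold $\g_n/\h_n$ into a continued fraction whose coefficients are explicit in terms of the positive definite Schur complements, which makes $\cc_k,\dd_k$ positive definite. For $n=2m+1$ the same argument applies to $\h_n/(z\g_n)$, using the pair $(Q_{2,m},P_{2,m})$ with the extra terminal coefficient $\cc_m$.
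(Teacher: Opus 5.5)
The paper gives no proof of its own here; it imports the representation from \cite{abH}. Your sufficiency direction and your coprimeness step are sound, and Proposition~\ref{prop4.3} does give what you say. Set $\tilde P_{1,j}(z)=P_{1,j}^*(\bar z)P_{1,j}^*(0)^{-1}$, $\tilde Q_{1,j}(z)=Q_{1,j}^*(\bar z)P_{1,j}^*(0)^{-1}$, $\tilde P_{2,j}(z)=P_{2,j}^*(\bar z)Q_{2,j}^*(0)^{-1}$ and $\tilde Q_{2,j}(z)=Q_{2,j}^*(\bar z)Q_{2,j}^*(0)^{-1}$. Differencing \eqref{B11j}--\eqref{B22j} in $j$ gives
\[
\begin{pmatrix}\tilde P_{1,j}&z\tilde P_{2,j}\\ \tilde Q_{1,j}&\tilde Q_{2,j}\end{pmatrix}
=
\begin{pmatrix}\tilde P_{1,j-1}&z\tilde P_{2,j-1}\\ \tilde Q_{1,j-1}&\tilde Q_{2,j-1}\end{pmatrix}
\begin{pmatrix}I_q&0_q\\ -\mathcal P_{1,j}^{(0)}\cdot 0_q-\mathcal Q_{2,j-1}^{(0)}&I_q\end{pmatrix}
\begin{pmatrix}I_q&z\mathcal P_{1,j}^{(0)}\\ 0_q&I_q\end{pmatrix}.
\]
Unfolding this and replacing $z$ by $-z$ produces \eqref{cc13aa} with $\cc_k=\mathcal P_{1,k}^{(0)}$ and $\dd_k=\mathcal Q_{2,k}^{(0)}$. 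These are positive definite by \eqref{eq:P1star0}--\eqref{eq:Q2star0}. Every factor has determinant $1$, so the block matrix is unimodular, which is exactly the right coprimeness of $\bigl(P_{1,m}^*(\bar z),Q_{1,m}^*(\bar z)\bigr)$ your monicity argument needs.

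The gap is in necessity. For an arbitrary HTM polynomial, the convergents of \eqref{cc13aa} obey a recurrence of this shape, but with the \emph{given} coefficients $\cc_k,\dd_k$. The recurrence above instead has coefficients $\mathcal P_{1,k}^{(0)},\mathcal Q_{2,k}^{(0)}$ computed from the Laurent coefficients $s_j$ of $\g_n/\h_n$. So ``exactly the same recurrences'' is the assertion $\cc_k=\mathcal P_{1,k}^{(0)}$, $\dd_k=\mathcal Q_{2,k}^{(0)}$ for all $k$. By block inversion $M_k=\mathcal P_{1,k}^{(0)}$ and $L_k=\mathcal Q_{2,k}^{(0)}$, so this is \eqref{MMLL}, and it carries essentially the whole content of the proposition. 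Matching \eqref{pq00} only yields $\cc_0=s_0^{-1}$. Your induction on $m$ has no mechanism for identifying the new pair $\cc_{m-1},\dd_{m-1}$ with quantities built from $s_{2m-2},s_{2m-1}$.

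Your positivity step is circular for the same reason. Writing $\cc_k$ through $\widehat H_{1,k}^{-1}$ presupposes that $H_{1,k-1}$ and $\widehat H_{1,k}$ are invertible and that the identification already holds. Moreover, the correspondence you cite (\eqref{MMLL} with \cite[Remark~4.4]{abH}) runs from Stieltjes positivity to $\cc_k,\dd_k>0$, not conversely.

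A workable repair is as follows. Prescribe $\mathcal P_{1,k}^{(0)}=\cc_k$ and $\mathcal Q_{2,k}^{(0)}=\dd_k$, and solve \eqref{eq:P10-explicit}--\eqref{eq:Q20-explicit} successively for $\widehat H_{1,0},\widehat H_{2,0},\widehat H_{1,1},\dots$. Each is uniquely determined and positive definite, since by \eqref{eq:P1star0}--\eqref{eq:Q2star0} the matrices $P_{1,k}(0)$ and $Q_{2,k}(0)$ involve only previously determined ones. Reading off the moments from \eqref{hh1j}--\eqref{hh2j} then gives a Stieltjes positive sequence. Your sufficiency computation, coprimeness and monicity show that $\f_n$ coincides with \eqref{hupfqp} built from that sequence.

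What still has to be proved is that this sequence is the Markov sequence of $\f_n$. That is the Pad\'e property that $-Q_{1,m}^*(\bar z)P_{1,m}^*(\bar z)^{-1}$ has expansion $-\sum_{j=0}^{2m-1}s_jz^{-j-1}+O(z^{-2m-1})$ at infinity, together with its analogue for $n=2m+1$. Proposition~\ref{prop4.3} does not give this, so it must be derived from \eqref{p1j} and \eqref{q1j} or cited.
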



\section{Relations between Markov parameters and coefficients}\label{sec3}
Let \(H_{1,j}\) and \(H_{2,j}\) be the block Hankel matrices defined
in \eqref{61}, and let \(Y_{r,j}\), \(r=1,2\), be defined by
\eqref{81}. Since the underlying moment sequence is Stieltjes
positive, these Hankel matrices are Hermitian positive definite.

The following lemma provides explicit formulas for the coefficients of
an HTM polynomial in terms of the matrices
\(\Sigma_{r,j}\), \(r=1,2\), which are constructed from the Markov
parameters. For \(r=1,2\), define
\begin{equation}\label{sigrr}
\Sigma_{r,j}:=
\begin{pmatrix}
-H_{r,j-1}^{-1}Y_{r,j}\\
I_q
\end{pmatrix},
\qquad j\geq1.
\end{equation}
These formulas are obtained by expanding the representation
\eqref{hupfqp} in powers of \(z\).

\begin{lemma}\label{pol-3}
Let \(A_j\) be the coefficients of the matrix polynomial
\(\mathbf f_n\) in \eqref{hup01}. Furthermore, let
\(T_j\), \(v_j\), \(u_{k,j}\), and \(\Sigma_{k,j}\), for \(k=1,2\),
be as in \eqref{56}, \eqref{60a}, \eqref{66}, and \eqref{sigrr},
respectively. 
Then the coefficients of \(\mathbf f_n\) admit the following explicit
representations.
If \(n=2m\), then
      \begin{align}
  A_{2j}=&(-1)^{m+j} v_m^* (T_m^{m-j})^*\Sigma_{1,m}, \quad j=0,1,\ldots, m-1,m, \label{A2m2mj0}\\
    A_{2j+1}=&(-1)^{m+j} u_{1,m}^* (T_m^{m-j})^*\Sigma_{1,m}, \quad j=0,1,\ldots, m-1. \label{A2m2mj10}
     \end{align}
     If \(n=2m+1\), then
 \begin{align}
  A_{2j}=&(-1)^{m+j} v_m^* (T_m^{m-j})^*\Sigma_{2,m}, \quad j=0,1,\ldots, m-1,m, \label{A2m2mj0a}\\
    A_{2j+1}=&(-1)^{m+j+1} u_{2,m}^*(T_m^{m-j})^*\Sigma_{2,m}, \quad j=0,1,\ldots, m-1,m. \label{A2m2mj10a}
     \end{align}
     In particular, the following equalities hold.  
     For \(n=2m\),
   \begin{align}
I_q&=   v_m^* (T_m^m)^*\Sigma_{1,m},\label{c01}\\
A_{1}&=-u_{1,m}^*(T_m^{m-1})^*\Sigma_{1,m}=s_0,\label{c02}\\
A_{2m-1}&=(-1)^m u_{1,m}^*\Sigma_{1,m}=(-1)^{m+1} Q_{1,m}^*(0),\label{c03}\\
A_{2m}&=(-1)^m v_m^*\Sigma_{1,m}=(-1)^m P_{1,m}^*(0).\label{c04}
   \end{align}
For \(n=2m+1\), the following equalities hold.
   \begin{align}
I_q&= v_m^* (T_m^m)^*\Sigma_{2,m},\label{c11}\\
A_{1}&=-u_{2,m}^*(T_m^m)^*\Sigma_{2,m}=s_0,\label{c12}\\
A_{2m}&= (-1)^{m} v_{m}^*\Sigma_{2,m}=(-1)^m P_{2,m}^*(0),\label{c13}\\
A_{2m+1}&= (-1)^{m+1} u_{2,m}^*\Sigma_{2,m}=(-1)^m Q_{2,m}^*(0).\label{c14}
   \end{align}
   \end{lemma}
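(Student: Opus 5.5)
Starting from Proposition~\ref{prop1.8}, I would write $P_{1,m}$, $Q_{1,m}$, $P_{2,m}$, $Q_{2,m}$ from Definition~\ref{def2p} in coefficient form and substitute them into \eqref{hupfqp}. The key observation is that $(-Y_{r,m}^*H_{r,m-1}^{-1},\,I_q)=\Sigma_{r,m}^*$, because $H_{r,m-1}$ is Hermitian. Hence
\begin{align*}
P_{r,m}(z)&=\Sigma_{r,m}^*R_m(z)v_m,\\
Q_{1,m}(z)&=-\Sigma_{1,m}^*R_m(z)u_{1,m},\\
Q_{2,m}(z)&=-\Sigma_{2,m}^*R_m(z)u_{2,m}.
\end{align*}
Since $T_m$ is nilpotent with $T_m^{m+1}=0$, \eqref{52} gives $R_m(z)=\sum_{k=0}^m z^kT_m^k$. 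Taking adjoints at $\bar w$ yields
\begin{align*}
P_{r,m}^*(\bar w)&=\sum_{k=0}^m w^k\, v_m^*(T_m^k)^*\Sigma_{r,m},\\
Q_{1,m}^*(\bar w)&=-\sum_{k=0}^m w^k\, u_{1,m}^*(T_m^k)^*\Sigma_{1,m},
\end{align*}
and similarly for $Q_{2,m}^*$.

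Next I set $w=-z^2$ and expand \eqref{hupfqp}. For $n=2m$ this gives
\[
\f_{2m}(z)=(-1)^m\sum_{k=0}^m(-1)^k z^{2k}\bigl(v_m^*(T_m^k)^*\Sigma_{1,m}+z\,u_{1,m}^*(T_m^k)^*\Sigma_{1,m}\bigr).
\]
The coefficient of $z^{2k}$ is $A_{2(m-k)}$ and the coefficient of $z^{2k+1}$ is $A_{2m-2k-1}$. Writing $j=m-k$, so that $(-1)^{m+k}=(-1)^{m+j}$, gives \eqref{A2m2mj0} directly. For the odd coefficients, put $j'=m-k-1$. Then $A_{2j'+1}$ is the coefficient of $z^{2(m-j'-1)+1}$, which involves $T_m^{m-j'-1}$, whereas \eqref{A2m2mj10} involves $T_m^{m-j'}$. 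I would reconcile these using the shift structure of $u_{1,m}$: its first block is $0_q$, so $u_{1,m}=T_m\bigl(-y_{[0,m-1]};\,\ast\bigr)$. The plan is to check that $u_{1,m}^*(T_m^{k})^*$ and the stated $T$-power expression match after this index shift, keeping track of the sign. This index and sign bookkeeping, including the sign produced by the leading minus in $Q_{1,m}$, is where I expect the main obstacle. If a mismatch appears, I would recheck the conventions by computing $m=1$ explicitly. The odd case $n=2m+1$ is analogous: $Q_{2,m}^*$ now supplies the even coefficients and $P_{2,m}^*$ the odd ones. With the roles assigned as in the lemma, the same expansion should give \eqref{A2m2mj0a} and \eqref{A2m2mj10a}. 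Because $u_{2,m}$ has no leading zero block, no index shift is needed there, which matches the different exponent pattern in the statement.

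The special cases follow by inspection.
\begin{itemize}
\item $v_m^*(T_m^m)^*$ selects the last block row, so \eqref{c01} and \eqref{c11} reduce to the bottom block $I_q$ of $\Sigma_{r,m}$.
\item For $A_1$, the operator $(T_m^{m-1})^*$ (respectively $(T_m^m)^*$) moves the relevant block of $u_{r,m}$ to the last position. The first entry of $y_{[0,\cdot]}$ is $s_0$, and the sign $-(-s_0)=s_0$ gives \eqref{c02} and \eqref{c12}.
\item The constant-term identities \eqref{c03}, \eqref{c04}, \eqref{c13} and \eqref{c14} come from setting $k=0$, i.e. $R_m(0)=I$, and comparing with $P_{r,m}^*(0)$ and $Q_{r,m}^*(0)$ in the displayed formulas above.
\end{itemize}
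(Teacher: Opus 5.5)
\textbf{There is a genuine gap: the proposal claims to derive the general formulas, but your own (correct) expansion does not yield them, and they are false as printed.}

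You follow the same route as the paper: insert Definition~\ref{def2p} into \eqref{hupfqp}, write $R_m(z)=\sum_{k=0}^m z^kT_m^k$, and compare coefficients. Your displayed expansion of $\f_{2m}$ is correct. The gap is in reading off the general formulas.

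Your step ``$j=m-k$, so that $(-1)^{m+k}=(-1)^{m+j}$'' is false, since $(-1)^{m+k}=(-1)^{2m-j}=(-1)^{j}$. So your expansion gives $A_{2j}=(-1)^{j}v_m^*(T_m^{m-j})^*\Sigma_{1,m}$, which differs from \eqref{A2m2mj0} by the factor $(-1)^m$. For odd $m$ the printed formula is wrong: at $j=0$ it gives $A_0=(-1)^mI_q$ by \eqref{c01}, and at $j=m$ it contradicts \eqref{c04}, because $v_m^*\Sigma_{1,m}=P_{1,m}^*(0)$ is invertible.

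Likewise, your planned ``reconciliation'' of $T_m^{m-j-1}$ with $T_m^{m-j}$ in \eqref{A2m2mj10} cannot succeed. Since the top block of $u_{1,m}$ is $0_q$, we have $T_m^mu_{1,m}=0$, so the right-hand side of \eqref{A2m2mj10} at $j=0$ is $0_q$. But $A_1=s_0$ is positive definite, exactly as your own bullet for \eqref{c02} computes using $T_m^{m-1}$.

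What your expansion actually proves is the following:
\begin{itemize}
\item for $n=2m$: $A_{2j+1}=(-1)^{j+1}u_{1,m}^*(T_m^{m-1-j})^*\Sigma_{1,m}$;
\item for $n=2m+1$: $A_{2j}=(-1)^{j}v_m^*(T_m^{m-j})^*\Sigma_{2,m}$ and $A_{2j+1}=(-1)^{j+1}u_{2,m}^*(T_m^{m-j})^*\Sigma_{2,m}$.
\end{itemize}
The odd-degree formulas agree with \eqref{A2m2mj0a} and \eqref{A2m2mj10a} only for even $m$. For $m=1$ the printed versions give $A_0=-I_q$ and $A_1=-s_0$ in $\f_3$, whereas the correct values are $I_q$ and $s_0$.

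So no index or sign bookkeeping will produce the four general formulas as printed. The $m=1$ check you propose would expose this rather than repair it. The paper's own proof has the same defect. Its displayed expansions \eqref{f2mAA} and \eqref{f2mp1AA} carry a spurious global factor $(-1)^m$, so their leading coefficient is $(-1)^mI_q$. In addition, \eqref{A2m2mj10} is shifted by one power of $T_m$ relative to \eqref{f2mAA}.

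The special identities \eqref{c01}--\eqref{c14} are correct. Your derivation of them directly from the expansion is sound, and it is the right way to get them: they follow from the corrected general formulas above, but they are inconsistent with the printed ones. A correct write-up should state the corrected formulas and derive \eqref{c01}--\eqref{c14} as their special cases.
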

\begin{proof}
From Proposition~\ref{prop1.8}, together with
(\ref{p1j})--(\ref{qj2}),
we obtain
 \begin{align}
 \f_{2m}(z)
 =&(-1)^m\left(v_m^*(T_m^{m})^*z^{2m}-u_{1,m}^*(T_m^{m-1})^*z^{2m-1}-v_m^*(T_m^{m-1})^*z^{2m-2}+\ldots \right.\nonumber\\
 & + (-1)^{m+1} u_{1,m}^*T_m^*z^3
 + (-1)^{m+1}z^2v_{m}^*T_m^*+(-1)^{m}z u_{1,m}^*\nonumber\\
 &\left.+(-1)^mv_m^*
 \right)\Sigma_{1,m}, \label{f2mAA}
 \\
\f_{2m+1}(z)=&
(-1)^m\left(v_m^*(T_m^m)^*z^{2m+1}-u_{2,m}^*(T_m^m)^*z^{2m}-v_m^*(T_m^{m-1})^*z^{2m-1}+\ldots
\right. \nonumber\\
& +(-1)^{m+1}  v_m^*T_m^*z^3  +(-1)^m u_{2,m}^* T_m^* z^2+(-1)^m v_m^* z\nonumber\\
  &\left.+ (-1)^{m+1} u_{2,m}^*
 \right)\Sigma_{2,m}. \label{f2mp1AA}
 \end{align}
 Comparing the coefficients of the same powers of \(z\)  in the above
representations and in (\ref{hup01}) yields
(\ref{A2m2mj0})--(\ref{A2m2mj10a}).

Identities (\ref{c01}) and (\ref{c11}) follow immediately from
(\ref{56}), (\ref{60a}), and the definition of \(\Sigma_{r,m}\).
The first equalities in (\ref{c02})--(\ref{c04}) and (\ref{c12})--(\ref{c14})
follow by comparing the coefficients of the corresponding powers of
\(z\). The second equalities in (\ref{c03})--(\ref{c04}) and (\ref{c13})--(\ref{c14})
follow immediately from the representation (\ref{hupfqp}) in Proposition~\ref{prop1.8}.
\end{proof}
 The explicit representations obtained in Lemma~\ref{pol-3} can be
worked out completely for low-degree HTM polynomials. The resulting
formulas illustrate the dependence of the polynomial coefficients on
the associated Markov parameters and will be used later in several
examples.
\begin{remark}\label{rem4.1}
Let \((s_j)_{j=0}^{2\kappa}\) (resp. \((s_j)_{j=0}^{2\kappa+1}\)) be Stieltjes positive sequences for
\(\kappa=0,1,2\).
Applying Lemma~\ref{pol-3} yields the following explicit expressions for the HTM polynomials \(\mathbf f_j\),
\(j=1,\ldots,5\),
in terms of the corresponding Markov parameters.
\begin{align}
\f_1(z)=&I_q z + s_0,\nonumber\\
\f_2(z)=&I_q z^2 + s_0z+ s_1s_0^{-1}, \nonumber\\
\f_3(z)=&I_q z^3  + s_0 z^2 +s_1^{-1}\,s_2 z  + s_0s_1^{-1}s_2  -s_1,\nonumber\\
\f_4(z)=&I_q z^4  + s_0 z^3 \nonumber\\
&+(s_2-s_1s_0^{-1}s_1)^{-1}(s_3-s_1s_0^{-1}s_2)z^2\label{coefAA2}
\\
&
+(s_0
(s_2-s_1s_0^{-1}s_1)^{-1}(s_3-s_1s_0^{-1}s_2)-s_1
)z,\nonumber\\
&+s_0^{-1}s_1(s_2-s_1s_0^{-1}s_1)^{-1}(s_3-s_1s_0^{-1}s_2)-s_0^{-1}s_2\nonumber
\\
\f_5(z)=&I_q z^5  + s_0 z^4 
+\left(s_3 - s_2 s_1^{-1} s_2\right)^{-1}\left(s_4 - s_2 s_1^{-1} s_3\right)z^3\nonumber\\
&+
\left(s_0 \left(s_3 - s_2 s_1^{-1} s_2\right)^{-1}\left(s_4 - s_2 s_1^{-1} s_3\right) - s_1\right)z^2\nonumber\\
&+s_1^{-1}\left(s_2 \left(s_3 - s_2 s_1^{-1} s_2\right)^{-1}\left(s_4 - s_2 s_1^{-1} s_3\right) - s_3\right)z\nonumber\\
&+s_0 s_1^{-1}(s_2 \left(s_3 - s_2 s_1^{-1} s_2\right)^{-1}\left(s_4 - s_2 s_1^{-1} s_3\right) - s_3)  + s_2\nonumber\\
&- s_1 \left(s_3 - s_2 s_1^{-1} s_2\right)^{-1}\left(s_4 - s_2 s_1^{-1} s_3\right).\nonumber
\end{align}
\end{remark}
The previous remark provides explicit expressions for HTM polynomials
of degrees \(1,\ldots,5\) in terms of their Markov parameters.
Analogous formulas in the scalar case were obtained in
\cite[Remark~8]{ab2020A}.

 The following remark gives
the inverse correspondence by expressing the Markov parameters in
terms of the polynomial coefficients.
\begin{remark}\label{rem4.101}
For degrees \(n=1,\ldots,5\), the Markov parameters are given by
\begin{align*}
&\text{for } n=1,\quad s_0=A_1,\\
&\text{for } n=2,\quad s_0=A_1,\quad s_1=A_2 A_1,\\
&\text{for } n=3,\quad s_0=A_1,\quad s_1=A_1A_2-A_3,\quad
s_2=(A_1A_2-A_3)A_2,\\
&\text{for } n=4,\quad s_0=A_1,\quad s_1=A_1A_2-A_3,\quad
s_2=(A_1A_2-A_3)A_2-A_1A_4,\\
&\qquad
 s_3=(A_3-A_1A_2)A_4+((A_1A_2-A_3)A_2-A_1A_4)A_2,\\
&\text{for } n=5,\quad s_0=A_1,\quad s_1=A_1A_2-A_3,\quad
s_2=A_5-A_1A_4+(A_1A_2-A_3)A_2,\\
&\qquad s_3=(A_5-A_1A_4+(A_1A_2-A_3)A_2)A_2
+(A_3-A_1A_2)A_4,\\
&\qquad s_4=((A_5-A_1A_4+(A_1A_2-A_3)A_2)A_2
+(A_3-A_1A_2)A_4)A_2\\
&\qquad\qquad
-(A_5-A_1A_4+(A_1A_2-A_3)A_2)A_4.
\end{align*}
\end{remark}


\section{Representation of the coefficients \(A_j\) in terms of orthogonal matrix polynomials}\label{sec4}
In this section, we derive explicit representations for the coefficients
\(A_j\) of an HTM polynomial in terms of the orthogonal matrix
polynomials \(P_{k,m}\) and \(Q_{k,m}\), their derivatives at the
origin, and the associated Schur complements.

The result follows directly from the representation
\eqref{hupfqp} by expanding the involved matrix polynomials into
Taylor series at the origin.
\begin{lemma}\label{lem3.14}
Let \(A_0,\ldots,A_n\) be the coefficients of the HTM polynomial
\(\mathbf f_n\) defined by \eqref{hup01}, and suppose that
\(\mathbf f_n\) admits the representation \eqref{hupfqp}.
Furthermore, let \(P_{k,m}\) and \(Q_{k,m}\), \(k=1,2\), be as in
Definition~\ref{def2p}. Then the following identities hold.

If \(n=2m\), \(m\geq1\), then
\begin{equation}\label{coef1}
A_{n-k}
=
\begin{cases}
\displaystyle
\frac{(-1)^{m+j}}{j!}
\left(P_{1,m}^{*}\right)^{(j)}(0),
&
k=2j,\quad 0\leq j\leq m,
\\[2ex]
\displaystyle
-\frac{(-1)^{m+j}}{j!}
\left(Q_{1,m}^{*}\right)^{(j)}(0),
&
k=2j+1,\quad 0\leq j\leq m-1.
\end{cases}
\end{equation}

If \(n=2m+1\), \(m\geq0\), then
\begin{equation}\label{coef2}
A_{n-k}
=
\begin{cases}
\displaystyle
\frac{(-1)^{m+j}}{j!}
\left(Q_{2,m}^{*}\right)^{(j)}(0),
&
k=2j,\quad 0\leq j\leq m,
\\[2ex]
\displaystyle
\frac{(-1)^{m+j}}{j!}
\left(P_{2,m}^{*}\right)^{(j)}(0),
&
k=2j+1,\quad 0\leq j\leq m.
\end{cases}
\end{equation}
\end{lemma}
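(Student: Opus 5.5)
The plan is to read off the coefficients directly from the representation \eqref{hupfqp} of Proposition~\ref{prop1.8}, after rewriting each matrix polynomial $X^*(\bar w)$ as an ordinary polynomial in $w$. For a matrix polynomial $X(w)=\sum_{i}w^iC_i$, the function $\widetilde X(w):=X^*(\bar w)=\sum_i w^iC_i^*$ is again a polynomial in $w$. Throughout, I interpret $(X^*)^{(j)}(0)$ as $\widetilde X^{(j)}(0)=j!\,C_j^*$, the $j$-th derivative of $w\mapsto X^*(\bar w)$ at the origin. Fixing this convention at the outset is the only delicate point, because the conjugation $\bar z$ inside the adjoint would otherwise make the meaning of the derivative ambiguous. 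By Taylor's formula,
\[
X^*(-\bar z^2)=\widetilde X(-z^2)=\sum_{j\ge0}\frac{(-1)^j}{j!}\,\widetilde X^{(j)}(0)\,z^{2j},
\]
where the sum is finite and runs up to $\deg X$.

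For $n=2m$, I substitute this expansion for $X=P_{1,m}$ and $X=Q_{1,m}$ into
\[
\f_{2m}(z)=(-1)^m\bigl(\widetilde P_{1,m}(-z^2)-z\,\widetilde Q_{1,m}(-z^2)\bigr).
\]
The even powers $z^{2j}$ come only from the first term, with coefficient $(-1)^{m+j}\widetilde P_{1,m}^{(j)}(0)/j!$. The odd powers $z^{2j+1}$ come only from the second term, with coefficient $-(-1)^{m+j}\widetilde Q_{1,m}^{(j)}(0)/j!$. By \eqref{hup01}, the coefficient of $z^k$ in $\f_n$ is $A_{n-k}$, where $A_0=I_q$. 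Comparing coefficients therefore gives both lines of \eqref{coef1}.

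The index ranges come from the degrees. Since $\deg P_{1,m}=m$, we get $0\le j\le m$. Since $\deg Q_{1,m}\le m-1$, which is visible from \eqref{q1j} because the top block of $u_{1,m}$ vanishes, we get $0\le j\le m-1$. As a consistency check, $j=m$ returns $A_0=I_q$, the monic leading coefficient, in agreement with \eqref{c01}.

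For $n=2m+1$, the same procedure applied to
\[
\f_{2m+1}(z)=(-1)^m\bigl(\widetilde Q_{2,m}(-z^2)+z\,\widetilde P_{2,m}(-z^2)\bigr)
\]
gives the following. The coefficient of $z^{2j}$ is $(-1)^{m+j}\widetilde Q_{2,m}^{(j)}(0)/j!$, and this equals $A_{n-2j}$. The coefficient of $z^{2j+1}$ is $(-1)^{m+j}\widetilde P_{2,m}^{(j)}(0)/j!$, and this equals $A_{n-2j-1}$. Here $\deg P_{2,m}=m$ and $\deg Q_{2,m}\le m$, which gives $0\le j\le m$ in both cases, and hence \eqref{coef2}. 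As a cross-check, setting $j=0$ recovers \eqref{c13} and \eqref{c14}, and the case $j=m$ reproduces $A_0=I_q$ and $A_1=s_0$ from Lemma~\ref{pol-3}.

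I do not expect any genuine obstacle here; the argument is bookkeeping. The only points needing care are the interpretation of the derivative of $X^*(\bar z)$ and the sign $(-1)^j$ produced by substituting $w=-z^2$. Both are fixed by the convention stated at the start.
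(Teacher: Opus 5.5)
Your proposal is correct and follows the paper's proof. Both expand the representation \eqref{hupfqp} by Taylor's formula in $-z^2$ and compare the coefficients of $z^{2j}$ and $z^{2j+1}$ with \eqref{hup01}; your convention $(X^*)^{(j)}(0)=\bigl(X^{(j)}(0)\bigr)^*$ and your degree bounds for the index ranges make explicit what the paper leaves implicit.
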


\begin{proof}
From \eqref{hup01} and Taylor's formula,
\begin{equation}\label{eq:coeff-derivative}
A_{n-k}
=
\frac{1}{k!}\mathbf f_n^{(k)}(0),
\qquad
k=0,1,\ldots,n.
\end{equation}

Suppose first that \(n=2m\). Expanding the representation
\eqref{hupfqp} at the origin gives
\[
\mathbf f_{2m}(z)
=
\sum_{j=0}^{m}
\frac{(-1)^{m+j}}{j!}
\left(P_{1,m}^{*}\right)^{(j)}(0)z^{2j}
-
\sum_{j=0}^{m-1}
\frac{(-1)^{m+j}}{j!}
\left(Q_{1,m}^{*}\right)^{(j)}(0)z^{2j+1}.
\]
Comparing the coefficients of \(z^{2j}\) and \(z^{2j+1}\) with
\eqref{hup01}, or equivalently using
\eqref{eq:coeff-derivative}, yields \eqref{coef1}.

Suppose next that \(n=2m+1\). Then
\[
\mathbf f_{2m+1}(z)
=
\sum_{j=0}^{m}
\frac{(-1)^{m+j}}{j!}
\left(Q_{2,m}^{*}\right)^{(j)}(0)z^{2j}
+
\sum_{j=0}^{m}
\frac{(-1)^{m+j}}{j!}
\left(P_{2,m}^{*}\right)^{(j)}(0)z^{2j+1}.
\]
Comparing coefficients with \eqref{hup01} gives \eqref{coef2}.
\end{proof}

\subsection{Nested sums and auxiliary matrices}
To simplify the notation, define
\begin{equation}\label{PPQQ0}
{\mathcal P}_{1,r}^{(0)}
=
P_{1,r}^{*}(0)\widehat H_{1,r}^{-1}P_{1,r}(0),
\qquad
{\mathcal Q}_{2,r}^{(0)}
=
Q_{2,r}^{*}(0)\widehat H_{2,r}^{-1}Q_{2,r}(0).
\end{equation}
Furthermore, let
\begin{align}
\Xi_m^{(0)}:=&I_q,\qquad
\Phi_m^{(0)}:=I_q,\qquad
\Theta_m^{(0)}:=\sum_{l=0}^{m}{\mathcal P}_{1,l}^{(0)},\qquad m\ge0,\label{01-eq}
\\
\Psi_0^{(0)}:=&0_q,
\qquad
\Psi_m^{(0)}
:=
\sum_{l=0}^{m-1}{\mathcal Q}_{2,l}^{(0)},
\quad m\ge1,
\label{11-eq}
\end{align}
where an empty sum is understood as the zero matrix \(0_{q\times q}\).

\begin{remark}\label{rem4.1A}
Let \(P_{k,j}\) and \(Q_{k,j}\) be as in
Definition~\ref{def2p}. 
Setting \(z=0\) in
\eqref{B12j} and \eqref{B21j}, and using the definitions
\eqref{PPQQ0}, \eqref{01-eq}, and \eqref{11-eq}, we obtain
\begin{align}
Q_{1,m}^*(0)
&=
-\Psi_m^{(0)}P_{1,m}^*(0),
\qquad m\geq1,
\label{QQ1J}
\\
P_{2,m}^*(0)
&=
\Theta_m^{(0)}Q_{2,m}^*(0),
\qquad m\geq0.
\label{PP2J}
\end{align}
\end{remark}

For \(r,j\in\mathbb N_0\), define
\begin{equation}\label{QQeq2}
\begin{aligned}
{\mathcal P}_{1,r}^{(j)}
&:=
\left(P_{1,r}^{(j)}(0)\right)^*
\widehat H_{1,r}^{-1}
P_{1,r}(0),
\\
{\mathcal Q}_{1,r}^{(j)}
&:=
\left(Q_{1,r}^{(j)}(0)\right)^*
\widehat H_{1,r}^{-1}
P_{1,r}(0),
\\
{\mathcal P}_{2,r}^{(j)}
&:=
\left(P_{2,r}^{(j)}(0)\right)^*
\widehat H_{2,r}^{-1}
Q_{2,r}(0),
\\
{\mathcal Q}_{2,r}^{(j)}
&:=
\left(Q_{2,r}^{(j)}(0)\right)^*
\widehat H_{2,r}^{-1}
Q_{2,r}(0),
\end{aligned}
\qquad r,j\in\mathbb N_0.
\end{equation}

\begin{lemma}\label{lem4.30m}
Let \(P_{k,j}\) and \(Q_{k,j}\) be as in Definition~\ref{def2p}.
Let \(\widehat H_{k,j}\), \(k=1,2\), be as in
\eqref{hh1j}--\eqref{hh2j}, 
and let
\({\mathcal P}_{1,r}^{(j)}\),
\({\mathcal Q}_{1,r}^{(j)}\),
\({\mathcal P}_{2,r}^{(j)}\), and
\({\mathcal Q}_{2,r}^{(j)}\)
be as in \eqref{QQeq2}.
Then, for every \(j\ge1\),
\begin{align}
\left(Q_{2,m}^{(j)}(0)\right)^*=&-j\left(
\sum_{l=0}^{m}
\left(
\sum_{r=0}^{l-1}
{\mathcal Q}_{2,r}^{(j-1)}
\right)
{\mathcal P}_{1,l}^{(0)}
\right)
Q_{2,m}^*(0), \qquad m\geq0
\label{mB11j0}
\\
\left(P_{1,m}^{(j)}(0)\right)^*=&-j\left(\sum_{l=0}^{m-1}\left(\sum_{r=0}^{l}
{\mathcal P}_{1,r}^{(j-1)}\right){\mathcal Q}_{2,l}^{(0)}\right)P_{1,m}^*(0), \qquad m\geq1,
\label{mB22j0}
\\
\left(Q_{1,m}^{(j)}(0)\right)^*=&-j\left(\sum_{l=0}^{m-1}\left(\sum_{r=0}^{l}{\mathcal Q}_{1,r}^{(j-1)}\right)
{\mathcal Q}_{2,l}^{(0)}\right)P_{1,m}^*(0),
\qquad m\geq1,
\label{mB12j0}
\end{align}
and
\begin{equation}
\left(P_{2,m}^{(j)}(0)\right)^*=-j
\left(
\sum_{l=0}^{m}
\left(
\sum_{r=0}^{l-1}
{\mathcal P}_{2,r}^{(j-1)}
\right)
{\mathcal P}_{1,l}^{(0)}
\right)
Q_{2,m}^*(0), \qquad m\geq0.
\label{mB21j0}
\end{equation}
\end{lemma}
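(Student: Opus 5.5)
The plan is to differentiate the four identities of Proposition~\ref{prop4.3} $j$ times at $z=0$ and then substitute them into one another. The structure of the statement shows the pairing: each claimed formula comes from one identity with a factor $z$, namely \eqref{B11j} or \eqref{B22j}, combined with one identity without such a factor, namely \eqref{B12j} or \eqref{B21j}.

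\textbf{Step 1: derivatives of the starred polynomials.} For a matrix polynomial $P(z)=\sum_k C_k z^k$ we have $P^*(\bar z)=\sum_k C_k^* z^k$. This is a polynomial in $z$, and its $j$-th derivative at $0$ equals $j!\,C_j^*=\bigl(P^{(j)}(0)\bigr)^*$.

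\textbf{Step 2: differentiate the identities with a factor $z$.} For \eqref{B11j} and \eqref{B22j}, Leibniz' rule gives $\frac{d^j}{dz^j}\bigl[zF(z)\bigr]\big|_{z=0}=jF^{(j-1)}(0)$ for $j\ge1$, and the constant $I_q$ drops out. Multiplying on the right by $Q_{2,m}^*(0)$, respectively $P_{1,m}^*(0)$, we obtain
\[
\bigl(Q_{2,m}^{(j)}(0)\bigr)^*=j\sum_{l=0}^m{\mathcal Q}_{1,l}^{(j-1)}\,Q_{2,m}^*(0),
\qquad
\bigl(P_{1,m}^{(j)}(0)\bigr)^*=-j\sum_{l=0}^{m-1}{\mathcal P}_{2,l}^{(j-1)}\,P_{1,m}^*(0).
\]
Here we use the definitions \eqref{QQeq2}.

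\textbf{Step 3: differentiate the identities without a factor $z$.} From \eqref{B12j} and \eqref{B21j} we obtain
\[
\bigl(Q_{1,l}^{(i)}(0)\bigr)^*=-\sum_{r=0}^{l-1}{\mathcal Q}_{2,r}^{(i)}\,P_{1,l}^*(0),
\qquad
\bigl(P_{2,l}^{(i)}(0)\bigr)^*=\sum_{r=0}^{l}{\mathcal P}_{1,r}^{(i)}\,Q_{2,l}^*(0),
\]
valid for all $i\ge0$. For $l=0$ the first formula still holds because $Q_{1,0}\equiv0_q$ and the sum is empty.

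\textbf{Step 4: substitute.} To get \eqref{mB11j0}, insert the Step~3 formula for $\bigl(Q_{1,l}^{(j-1)}(0)\bigr)^*$ into ${\mathcal Q}_{1,l}^{(j-1)}=\bigl(Q_{1,l}^{(j-1)}(0)\bigr)^*\widehat H_{1,l}^{-1}P_{1,l}(0)$. The trailing factor $P_{1,l}^*(0)\widehat H_{1,l}^{-1}P_{1,l}(0)$ is exactly ${\mathcal P}_{1,l}^{(0)}$ from \eqref{PPQQ0}. Identity \eqref{mB22j0} follows the same way: write ${\mathcal P}_{2,l}^{(j-1)}$ via Step~3, and the product $Q_{2,l}^*(0)\widehat H_{2,l}^{-1}Q_{2,l}(0)$ collapses to ${\mathcal Q}_{2,l}^{(0)}$.

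For \eqref{mB12j0} and \eqref{mB21j0}, reverse the order. First differentiate \eqref{B12j} (respectively \eqref{B21j}) $j$ times. This expresses $\bigl(Q_{1,m}^{(j)}(0)\bigr)^*$ through $\bigl(Q_{2,l}^{(j)}(0)\bigr)^*$, and $\bigl(P_{2,m}^{(j)}(0)\bigr)^*$ through $\bigl(P_{1,l}^{(j)}(0)\bigr)^*$. Then replace these derivatives using the Step~2 formulas with $m$ replaced by $l$. The products $Q_{2,l}^*(0)\widehat H_{2,l}^{-1}Q_{2,l}(0)$ and $P_{1,l}^*(0)\widehat H_{1,l}^{-1}P_{1,l}(0)$ again collapse to ${\mathcal Q}_{2,l}^{(0)}$ and ${\mathcal P}_{1,l}^{(0)}$. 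The minus signs combine to give the stated $-j$.

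\textbf{Main obstacle.} Nothing here is deep; the work is bookkeeping. The points needing care are the index ranges of the nested sums ($r\le l-1$ versus $r\le l$, and $l\le m$ versus $l\le m-1$) and the boundary cases $l=0$. In those cases one uses $P_{1,0}\equiv I_q$ and $Q_{1,0}\equiv 0_q$ from \eqref{pq00}, and the convention that empty sums vanish. One must also check that each side is multiplied on the correct side by the invertible matrices $Q_{2,m}^*(0)$ and $P_{1,m}^*(0)$, whose invertibility is implicit in Proposition~\ref{prop4.3}.
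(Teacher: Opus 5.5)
Your proposal is correct and follows the same route as the paper: differentiate the identities of Proposition~\ref{prop4.3} $j$ times, evaluate at $z=0$ using the fact that $\frac{d^j}{dz^j}\bigl(zF(z)\bigr)$ at $z=0$ equals $jF^{(j-1)}(0)$, and rewrite everything through the definitions \eqref{QQeq2}. Your explicit cross-substitution between the paired identities, together with the boundary cases $P_{1,0}\equiv I_q$ and $Q_{1,0}\equiv 0_q$, is exactly the step the paper's one-line proof leaves implicit, and your signs and index ranges all check out.
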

 \begin{proof}
 The identities follow by differentiating
\eqref{B11j}--\eqref{B22j}
\(j\) times with respect to \(z\), evaluating at
\(z=0\), using
\[
\left.
\frac{d^j}{dz^j}\bigl(zF(z)\bigr)
\right|_{z=0}
=
jF^{(j-1)}(0),
\]
and finally applying the definitions in
\eqref{QQeq2}.
\end{proof}
The following auxiliary matrices will be used to express the iterated
identities obtained from Lemma~\ref{lem4.30m}. For \(j\ge1\), define
\begin{align}
\Xi_m^{(j)}&:=\sum_{l=0}^{m}\Psi_l^{(j-1)}{\mathcal P}_{1,l}^{(0)},
\qquad
\Phi_m^{(j)}:=\sum_{l=0}^{m-1}\Theta_l^{(j-1)}{\mathcal Q}_{2,l}^{(0)},
\label{04-eq}
\\
\Theta_m^{(j)}&:=\sum_{l=0}^{m}\Phi_l^{(j)}{\mathcal P}_{1,l}^{(0)}, \qquad
\Psi_m^{(j)}:=\sum_{l=0}^{m-1}\Xi_l^{(j)}{\mathcal Q}_{2,l}^{(0)}.\label{06-eq}
\end{align}

The next lemma follows by iteratively applying \eqref{mB11j0}--\eqref{mB21j0}
together with the recursive definitions \eqref{04-eq}--\eqref{06-eq}.
\begin{lemma}\label{lem3.3}
Let \(P_{k,m}\) and \(Q_{k,m}\) be as in
Definition~\ref{def2p}, 
and let $\Xi_m^{(j)}$, $\Phi_m^{(j)}$, $\Theta_m^{(j)}$, and $\Psi_m^{(j)}$ be as in
\eqref{04-eq} and \eqref{06-eq}.
Then, for every \(j\ge1\), the following identities hold.
\begin{align}
\left(Q_{2,m}^{(j)}(0)\right)^*&=(-1)^j j!\Xi_m^{(j)}Q_{2,m}^*(0),\qquad m\geq0,
\label{nn11j0}
\\
\left(P_{1,m}^{(j)}(0)\right)^*&=(-1)^j j! \Phi_m^{(j)}P_{1,m}^{*}(0),
\qquad m\geq1,\label{nn22j0}
\\
\left(Q_{1,m}^{(j)}(0)\right)^*&=(-1)^{j+1}j!\Psi_m^{(j)}P_{1,m}^{*}(0),
\qquad m\geq1,
\label{nn12j0}
\end{align}
and
\begin{equation}
\left(P_{2,m}^{(j)}(0)\right)^*=(-1)^j j!\Theta_m^{(j)}Q_{2,m}^{*}(0),\qquad m\geq0.
\label{nn21j0}
\end{equation}
\end{lemma}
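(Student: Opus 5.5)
Proof by strong induction on \(j\ge1\), proving the four identities \eqref{nn11j0}--\eqref{nn21j0} simultaneously for all admissible \(m\). The only tools are Lemma~\ref{lem4.30m}, Remark~\ref{rem4.1A} and the recursions \eqref{04-eq}--\eqref{06-eq}. Lemma~\ref{lem4.30m} writes \(j\)-th derivatives at \(0\) through the auxiliary products \({\mathcal Q}_{2,r}^{(j-1)}\), \({\mathcal P}_{1,r}^{(j-1)}\), \({\mathcal Q}_{1,r}^{(j-1)}\), \({\mathcal P}_{2,r}^{(j-1)}\) of \eqref{QQeq2}. Each of these contains a \((j-1)\)-st derivative. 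Substituting the induction hypothesis (or the \(j-1=0\) data) factors each one as a scalar times an auxiliary matrix times a \({\mathcal P}^{(0)}\) or \({\mathcal Q}^{(0)}\) block. The double sums in Lemma~\ref{lem4.30m} then collapse exactly into the recursions defining \(\Xi,\Phi,\Theta,\Psi\).

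\textbf{Base case \(j=1\).} For \eqref{nn11j0}, take \(j=1\) in \eqref{mB11j0}. The inner sum is \(\sum_{r=0}^{l-1}{\mathcal Q}_{2,r}^{(0)}=\Psi_l^{(0)}\) by \eqref{11-eq}, so \(\left(Q_{2,m}'(0)\right)^*=-\Xi_m^{(1)}Q_{2,m}^*(0)\) by \eqref{04-eq}.

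For \eqref{nn22j0}, the inner sum is \(\Theta_l^{(0)}\), which gives \(-\Phi_m^{(1)}P_{1,m}^*(0)\).

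For \eqref{nn12j0}, rewrite \({\mathcal Q}_{1,r}^{(0)}=Q_{1,r}^*(0)\widehat H_{1,r}^{-1}P_{1,r}(0)\) using \eqref{QQ1J}: it equals \(-\Psi_r^{(0)}{\mathcal P}_{1,r}^{(0)}\), valid also for \(r=0\) since \(Q_{1,0}=0\) and \(\Psi_0^{(0)}=0\). Hence \(\sum_{r=0}^{l}{\mathcal Q}_{1,r}^{(0)}=-\Xi_l^{(1)}\), and the sign bookkeeping produces \(\left(Q_{1,m}'(0)\right)^*=+\Psi_m^{(1)}P_{1,m}^*(0)\), matching \((-1)^{2}\cdot 1!\).

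For \eqref{nn21j0}, use \eqref{PP2J}: \({\mathcal P}_{2,r}^{(0)}=\Theta_r^{(0)}{\mathcal Q}_{2,r}^{(0)}\). Then \(\sum_{r=0}^{l-1}{\mathcal P}_{2,r}^{(0)}=\Phi_l^{(1)}\), and the outer sum gives \(\Theta_m^{(1)}\).

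\textbf{Induction step \(j-1\to j\), \(j\ge2\).} Apply the adjoint-free form of the hypothesis. For instance, \eqref{nn11j0} at order \(j-1\) gives
\[
{\mathcal Q}_{2,r}^{(j-1)}=(-1)^{j-1}(j-1)!\,\Xi_r^{(j-1)}{\mathcal Q}_{2,r}^{(0)}.
\]
This holds because the derivative appears adjointed on the left in \eqref{QQeq2}, and the hypothesis expresses exactly that adjoint. Then \(\sum_{r=0}^{l-1}{\mathcal Q}_{2,r}^{(j-1)}=(-1)^{j-1}(j-1)!\,\Psi_l^{(j-1)}\) by \eqref{06-eq}. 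Inserting this in \eqref{mB11j0} gives
\[
-j(-1)^{j-1}(j-1)!\,\Xi_m^{(j)}Q_{2,m}^*(0)=(-1)^j j!\,\Xi_m^{(j)}Q_{2,m}^*(0).
\]
The other three cases follow the same pattern:
\begin{itemize}
\item \({\mathcal P}_{1,r}^{(j-1)}=(-1)^{j-1}(j-1)!\,\Phi_r^{(j-1)}{\mathcal P}_{1,r}^{(0)}\) sums to \(\Theta_l^{(j-1)}\), and the outer sum gives \(\Phi_m^{(j)}\).
\item \({\mathcal Q}_{1,r}^{(j-1)}=(-1)^{j}(j-1)!\,\Psi_r^{(j-1)}{\mathcal P}_{1,r}^{(0)}\) sums to \(\Xi_l^{(j)}\), and the outer sum gives \(\Psi_m^{(j)}\) with the extra sign.
\item \({\mathcal P}_{2,r}^{(j-1)}=(-1)^{j-1}(j-1)!\,\Theta_r^{(j-1)}{\mathcal Q}_{2,r}^{(0)}\) sums to \(\Phi_l^{(j)}\), and the outer sum gives \(\Theta_m^{(j)}\).
\end{itemize}

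\textbf{Main obstacle.} The delicate points are the index conventions rather than any conceptual difficulty:
\begin{itemize}
\item The ranges \(m\ge0\) versus \(m\ge1\) must be respected.
\item The boundary terms \(r=0\) need care: \(P_{1,0}=I_q\), \(Q_{1,0}=0\), and the \(l=0\) term of \(\Phi\) is empty.
\item The hypothesis must be applied to \(P_{1,r}\) and \(Q_{1,r}\) with \(r=0\), where \eqref{nn22j0} and \eqref{nn12j0} are not stated. There I will check directly that the derivatives vanish, consistent with the recursion.
\item The alternating sign in the \(Q_{1}\) identity must be tracked.
\end{itemize}
I would verify these edge cases first and then run the uniform computation above.
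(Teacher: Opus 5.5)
Your proposal is correct and follows essentially the same route as the paper: induction on \(j\) via Lemma~\ref{lem4.30m}. You rewrite the hypothesis at order \(j-1\) as factorizations of \({\mathcal Q}_{2,r}^{(j-1)}\), \({\mathcal P}_{1,r}^{(j-1)}\), \({\mathcal Q}_{1,r}^{(j-1)}\), \({\mathcal P}_{2,r}^{(j-1)}\) and collapse the nested sums through \eqref{04-eq}--\eqref{06-eq}. Your explicit use of Remark~\ref{rem4.1A} in the base cases of \eqref{nn12j0} and \eqref{nn21j0}, and your check of the \(r=0\) boundary terms, fill in details that the paper's proof passes over as ``immediate.''
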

\begin{proof}
The proof proceeds by induction on \(j\).

For \(j=1\), the identities follow immediately from
\eqref{mB11j0}--\eqref{mB21j0} together with the definitions
\eqref{04-eq}--\eqref{06-eq}.

Suppose that the assertions hold for \(j-1\).
Then
\[
{\mathcal Q}_{2,r}^{(j-1)}=(-1)^{j-1}(j-1)! \Xi_r^{(j-1)}{\mathcal Q}_{2,r}^{(0)}.\]
Using the induction hypothesis and \eqref{mB11j0}, we obtain
\begin{align*}
\left(Q_{2,m}^{(j)}(0)\right)^*&=-j(-1)^{j-1}(j-1)!\left[\sum_{l=0}^{m}\left(\sum_{r=0}^{l-1}
\Xi_r^{(j-1)}{\mathcal Q}_{2,r}^{(0)}
\right)
{\mathcal P}_{1,l}^{(0)}
\right]
Q_{2,m}^*(0)
\\
&=(-1)^j j!\left[\sum_{l=0}^{m}\Psi_l^{(j-1)}{\mathcal P}_{1,l}^{(0)}\right]
Q_{2,m}^*(0)
\\
&=(-1)^j j! \Xi_m^{(j)}Q_{2,m}^*(0).
\end{align*}
The remaining identities are obtained in the same way.
Indeed,
${\mathcal P}_{1,r}^{(j-1)}=(-1)^{j-1}(j-1)!\Phi_r^{(j-1)}{\mathcal P}_{1,r}^{(0)}$,
and \eqref{mB22j0} gives \eqref{nn22j0}.
Furthermore,
${\mathcal Q}_{1,r}^{(j-1)}=(-1)^j(j-1)!\Psi_r^{(j-1)}{\mathcal P}_{1,r}^{(0)}$, 
so that \eqref{mB12j0} yields \eqref{nn12j0}.
Finally,
${\mathcal P}_{2,r}^{(j-1)}=(-1)^{j-1}(j-1)!\Theta_r^{(j-1)}{\mathcal Q}_{2,r}^{(0)}$,
and \eqref{mB21j0} implies \eqref{nn21j0}.
This completes the proof.
\end{proof}

The next remark shows that the matrices \({\mathcal P}_{1,j}^{(0)}\) and
\({\mathcal Q}_{2,j}^{(0)}\) can be written explicitly in terms of the Schur complements.
\begin{remark}\label{rem3.4}
Let \({\mathcal P}_{1,j}^{(0)}\), \({\mathcal Q}_{2,j}^{(0)}\),
and \(\widehat H_{k,j}\) be as above.

By Proposition~4.9(e) of \cite{ablaa},
\begin{align}
P_{1,j}^*(0)&=(-1)^j\widehat H_{1,0}^{-1}\widehat H_{2,0}\cdots \widehat H_{1,j-1}^{-1}\widehat H_{2,j-1},
\label{eq:P1star0}
\\
Q_{2,j}^*(0)
&=
(-1)^j
\widehat H_{1,0}\widehat H_{2,0}^{-1}
\widehat H_{1,1}
\cdots
\widehat H_{2,j-1}^{-1}\widehat H_{1,j}.
\label{eq:Q2star0}
\end{align}
Consequently,
\begin{align}
{\mathcal Q}_{2,j}^{(0)}
&=
\widehat H_{1,0}\widehat H_{2,0}^{-1}
\widehat H_{1,1}
\cdots
\widehat H_{2,j-1}^{-1}\widehat H_{1,j}
\widehat H_{2,j}^{-1}
\widehat H_{1,j}\widehat H_{2,j-1}^{-1}
\cdots
\widehat H_{1,1}\widehat H_{2,0}^{-1}
\widehat H_{1,0},
\label{eq:Q20-explicit}
\\
{\mathcal P}_{1,j}^{(0)}
&=
\widehat H_{1,0}^{-1}\widehat H_{2,0}
\cdots
\widehat H_{1,j-1}^{-1}\widehat H_{2,j-1}
\widehat H_{1,j}^{-1}
\widehat H_{2,j-1}\widehat H_{1,j-1}^{-1}
\cdots
\widehat H_{2,0}\widehat H_{1,0}^{-1},
\label{eq:P10-explicit}
\end{align}
where the products are interpreted as empty products when \(j=0\).
\end{remark}

The following theorem is the main result of this section.
It shows that every coefficient of an HTM polynomial admits an explicit
representation solely in terms of the associated Schur complements.
\begin{theorem}\label{teo3.4}
Let \(\mathbf f_n\) be the matrix polynomial \eqref{hup01}, and let
\(\Xi_m^{(j)}\), \(\Phi_m^{(j)}\),
\(\Psi_m^{(j)}\), and \(\Theta_m^{(j)}\)
be as in \eqref{01-eq}, \eqref{11-eq},
\eqref{04-eq}, and \eqref{06-eq}.
Then the coefficients of \(\mathbf f_n\) admit the following
representations.
For \(n=2m\), \(m\geq1\),
\begin{align}\label{coef1CC}
A_{n-k}
=
\begin{cases}
\displaystyle
\Phi_m^{(j)}
\widehat H_{1,0}^{-1}\widehat H_{2,0}
\cdots
\widehat H_{1,m-1}^{-1}\widehat H_{2,m-1},
&
\text{if } k=2j,\quad 0\leq j\leq m,
\\[2ex]
\displaystyle
\Psi_m^{(j)}
\widehat H_{1,0}^{-1}\widehat H_{2,0}
\cdots
\widehat H_{1,m-1}^{-1}\widehat H_{2,m-1},
&
\text{if } k=2j+1,\quad 0\leq j\leq m-1.
\end{cases}
\end{align}

For \(n=2m+1\), \(m\geq0\),
\begin{align}\label{coef2CC}
A_{n-k}
=
\begin{cases}
\displaystyle
\Xi_m^{(j)}
\widehat H_{1,0}\widehat H_{2,0}^{-1}
\widehat H_{1,1}
\cdots
\widehat H_{2,m-1}^{-1}\widehat H_{1,m},
&
\text{if } k=2j,\quad 0\leq j\leq m,
\\[2ex]
\displaystyle
\Theta_m^{(j)}
\widehat H_{1,0}\widehat H_{2,0}^{-1}
\widehat H_{1,1}
\cdots
\widehat H_{2,m-1}^{-1}\widehat H_{1,m},
&
\text{if } k=2j+1,\quad 0\leq j\leq m.
\end{cases}
\end{align}
For \(m=0\), there are no factors following
\(\widehat H_{1,0}\); hence the product appearing in
\eqref{coef2CC} is understood to be simply
\(\widehat H_{1,0}\).
\end{theorem}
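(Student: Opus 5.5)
Plan: combine Lemma~\ref{lem3.14}, Lemma~\ref{lem3.3}, Remark~\ref{rem4.1A} and Remark~\ref{rem3.4}. Lemma~\ref{lem3.14} writes each coefficient as a normalized Taylor coefficient at the origin of $P_{1,m}^*$, $Q_{1,m}^*$ (for $n=2m$) or $Q_{2,m}^*$, $P_{2,m}^*$ (for $n=2m+1$). Lemma~\ref{lem3.3} writes each $j$-th derivative at $0$ as $(-1)^j j!$ times one of $\Phi_m^{(j)},\Psi_m^{(j)},\Xi_m^{(j)},\Theta_m^{(j)}$, multiplied on the right by $P_{1,m}^*(0)$ or $Q_{2,m}^*(0)$. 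Remark~\ref{rem3.4} then expresses these two values through the Schur complements.

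\emph{Even degree $n=2m$, $k=2j$.} By \eqref{coef1} and \eqref{nn22j0},
\[
A_{n-2j}=\frac{(-1)^{m+j}}{j!}(-1)^j j!\,\Phi_m^{(j)}P_{1,m}^*(0)=(-1)^m\Phi_m^{(j)}P_{1,m}^*(0).
\]
Substituting \eqref{eq:P1star0} contributes another $(-1)^m$, so the signs cancel and we get the claimed product. For $j=0$ the same holds with $\Phi_m^{(0)}=I_q$, which is \eqref{c04}.

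\emph{Even degree, $k=2j+1$.} Use \eqref{coef1} with \eqref{nn12j0}:
\[
A_{n-2j-1}=-\frac{(-1)^{m+j}}{j!}(-1)^{j+1}j!\,\Psi_m^{(j)}P_{1,m}^*(0)=(-1)^m\Psi_m^{(j)}P_{1,m}^*(0),
\]
and conclude again with \eqref{eq:P1star0}. The case $j=0$ needs separate treatment, since Lemma~\ref{lem3.3} is stated for $j\ge1$. There \eqref{QQ1J} gives $Q_{1,m}^*(0)=-\Psi_m^{(0)}P_{1,m}^*(0)$, so $A_{2m-1}=(-1)^{m+1}Q_{1,m}^*(0)=(-1)^m\Psi_m^{(0)}P_{1,m}^*(0)$, consistent with \eqref{c03}.

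\emph{Odd degree $n=2m+1$.} For $k=2j$, \eqref{coef2} and \eqref{nn11j0} give $(-1)^m\Xi_m^{(j)}Q_{2,m}^*(0)$; for $j=0$ use $\Xi_m^{(0)}=I_q$. For $k=2j+1$, \eqref{nn21j0} gives $(-1)^m\Theta_m^{(j)}Q_{2,m}^*(0)$; for $j=0$ use \eqref{PP2J}, $P_{2,m}^*(0)=\Theta_m^{(0)}Q_{2,m}^*(0)$. In both cases \eqref{eq:Q2star0} supplies $(-1)^m$ times the product $\widehat H_{1,0}\widehat H_{2,0}^{-1}\cdots\widehat H_{1,m}$, so the signs cancel. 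When $m=0$ this product reduces to $\widehat H_{1,0}=s_0$, and the formulas give $A_1=s_0$ and $A_0=I_q$ (the latter because $\Xi_0^{(0)}=I_q$), which agrees with $\f_1(z)=I_qz+s_0$.

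The main obstacle is the sign bookkeeping. It requires checking that the prefactors $(-1)^{m+j}$ from Lemma~\ref{lem3.14}, the factors $(-1)^j$ or $(-1)^{j+1}$ from Lemma~\ref{lem3.3}, and the $(-1)^m$ from Remark~\ref{rem3.4} all combine to $+1$, and that the $j=0$ cases are covered by the conventions \eqref{01-eq}--\eqref{11-eq}. It also requires that Remark~\ref{rem4.1A} and Lemma~\ref{lem3.3} match the conventions of Lemma~\ref{lem3.14}. I would verify this against the $n=2$ and $n=3$ formulas in Remark~\ref{rem4.1}.
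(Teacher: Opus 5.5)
Your proposal is correct and is essentially the paper's proof: combining \eqref{coef1}--\eqref{coef2} with Lemma~\ref{lem3.3} gives $(-1)^m\Phi_m^{(j)}P_{1,m}^*(0)$, $(-1)^m\Psi_m^{(j)}P_{1,m}^*(0)$, $(-1)^m\Xi_m^{(j)}Q_{2,m}^*(0)$ and $(-1)^m\Theta_m^{(j)}Q_{2,m}^*(0)$, and substituting \eqref{eq:P1star0}, \eqref{eq:Q2star0} cancels the sign; your explicit treatment of $j=0$ via $\Phi_m^{(0)}=\Xi_m^{(0)}=I_q$ and \eqref{QQ1J}, \eqref{PP2J} fills in a step the paper leaves implicit, since Lemma~\ref{lem3.3} is stated only for $j\ge1$. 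Two minor points on your sanity checks: for $m=0$ it is $A_1=s_0$ that comes from $\Xi_0^{(0)}=I_q$, whereas $A_0=I_q$ comes from $\Theta_0^{(0)}\widehat H_{1,0}=\widehat H_{1,0}^{-1}\widehat H_{1,0}$; and for $n=2$ the theorem gives $A_2=\widehat H_{1,0}^{-1}\widehat H_{2,0}=s_0^{-1}s_1$, which agrees with \eqref{c04} and Remark~\ref{rem4.2}, so the $s_1s_0^{-1}$ printed in Remark~\ref{rem4.1} is not a reliable benchmark.
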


\begin{proof}
Combining \eqref{coef1}, \eqref{coef2}, and \eqref{nn11j0}--\eqref{nn21j0},
 we obtain, for \(n=2m\),
\begin{align}\label{coef1CC00}
A_{n-k}
=
\begin{cases}
\displaystyle
(-1)^m\Phi_m^{(j)}P_{1,m}^*(0),
& \text{if } k=2j,
\\[2ex]
\displaystyle
(-1)^m\Psi_m^{(j)}P_{1,m}^*(0),
& \text{if } k=2j+1,
\end{cases}
\end{align}
and, for \(n=2m+1\),
\begin{align}\label{coef2CC00}
A_{n-k}
=
\begin{cases}
\displaystyle
(-1)^m\Xi_m^{(j)}Q_{2,m}^*(0),
& \text{if } k=2j,
\\[2ex]
\displaystyle
(-1)^m\Theta_m^{(j)}Q_{2,m}^*(0),
& \text{if } k=2j+1.
\end{cases}
\end{align}
Finally, substituting \eqref{eq:P1star0} and
\eqref{eq:Q2star0} into \eqref{coef1CC00} and
\eqref{coef2CC00}, respectively, produces
\eqref{coef1CC} and \eqref{coef2CC}.
\end{proof}

\begin{remark}\label{rem4.03AAA}
Theorem~\ref{teo3.4} eliminates the orthogonal matrix polynomials
\(P_{k,m}\) and \(Q_{k,m}\) from the representation of the
coefficients.
Consequently, every coefficient of an HTM polynomial can be computed
directly from the Schur complements
\(\widehat H_{1,j}\) and
\(\widehat H_{2,j}\).
\end{remark}

\subsection{Low-degree examples}

In this subsection we present explicit expressions for the HTM
polynomials of degrees \(1,\ldots,5\). These examples illustrate the
general formulas obtained in the previous subsection and admit several
equivalent representations. We first express the polynomials in terms
of the orthogonal matrix polynomials
\((P_{k,j},Q_{k,j})\), then in terms of the auxiliary matrices
\({\mathcal P}_{1,j}^{(0)}\) and
\({\mathcal Q}_{2,j}^{(0)}\), next in terms of the Schur complements
\(\widehat H_{1,j}\) and \(\widehat H_{2,j}\), and finally in terms of
the Hurwitz parameters \((\cc_j,\dd_j)\).

\begin{remark}\label{rem4.3}
Let $P_{k,j}$ and $Q_{k,j}$ be the matrix polynomials defined in Definition~\ref{def2p}.
Then the HTM polynomials \(f_j\), \(j=1,\ldots,5\), are given
in terms of \(P_{k,j}\), \(Q_{k,j}\), and their derivatives by
the following equalities.
\begin{align*}
\f_1(z)=&
P_{2,0}^*(0)z+Q_{2,0}^*(0)\\
\f_2(z)=&-P_{1,1}^*(0)+zQ_{1,1}^*(0)+z^2\left(P_{1,1}^\prime(0)\right)^*\\
\f_3(z)=&-Q_{2,1}^*(0)-P_{2,1}^*(0)z+ (Q_{2,1}^\prime(0))^* z^2+(P_{2,1}^\prime(0))^* z^3\\
\f_4(z)=&P_{1,2}^*(0)-(Q_{1,2}^*(0))z- (P_{1,2}^\prime(0))^* z^2+(Q_{1,2}^\prime(0))^* z^3+\frac{1}{2}(P_{1,2}^{\prime\prime}(0))^* z^4\\
\f_5(z)=&Q_{2,2}^*(0)+P_{2,2}^*(0)z-(Q_{2,2}^\prime(0))^* z^2-(P_{2,2}^\prime(0))^* z^3+\frac{1}{2}(Q_{2,2}^{\prime\prime}(0))^* z^4\\
&+
\frac{1}{2}(P_{2,2}^{\prime\prime}(0))^* z^5
\end{align*}
\end{remark}

The representations in Remark~\ref{rem4.3} still involve the
orthogonal matrix polynomials and their derivatives. Using the
identities established in the previous subsection, these derivatives
can be eliminated in favor of the auxiliary matrices
\({\mathcal P}_{1,j}^{(0)}\) and
\({\mathcal Q}_{2,j}^{(0)}\), leading to the following formulas.

\begin{remark}\label{rem4.3A}
Let \({\mathcal Q}_{2,l}^{(0)}\) and
\({\mathcal P}_{1,l}^{(0)}\) be as in \eqref{PPQQ0}.
Furthermore, let \(P_{k,j}\) and \(Q_{k,j}\) be as in
Definition~\ref{def2p}.
Using \eqref{QQeq2}, \eqref{QQ1J}, \eqref{PP2J},
and Lemma~\ref{lem4.30m},
the HTM polynomials \(\mathbf f_j\), \(j=1,\ldots,5\),
admit the following representations in terms of
\({\mathcal Q}_{2,l}^{(0)}\) and
\({\mathcal P}_{1,l}^{(0)}\).
\begin{align}
\f_1(z)=&\left({\mathcal P}_{1,0}^{(0)}z+I\right)Q_{2,0}^*(0)\nonumber
\\
\f_2(z)=&-\left({\mathcal P}_{1,0}^{(0)}{\mathcal Q}_{2,0}^{(0)}z^2+{\mathcal Q}_{2,0}^{(0)}z+I\right)P_{1,1}^*(0)
\nonumber
\\
\f_3(z)=&-\left({\mathcal P}_{1,0}^{(0)} {\mathcal Q}_{2,0}^{(0)}{\mathcal P}_{1,1}^{(0)} z^3+{\mathcal Q}_{2,0}^{(0)}{\mathcal P}_{1,1}^{(0)}z^2+
({\mathcal P}_{1,0}^{(0)}+{\mathcal P}_{1,1}^{(0)})z+I\right)Q_{2,1}^*(0)
\nonumber
\\
\f_4(z)=&
 \left({\mathcal P}_{1,0}^{(0)} {\mathcal Q}_{2,0}^{(0)}{\mathcal P}_{1,1}^{(0)}{\mathcal Q}_{2,1}^{(0)} z^4
 +{\mathcal Q}_{2,0}^{(0)}{\mathcal P}_{1,1}^{(0)}{\mathcal Q}_{2,1}^{(0)}z^3\right.\nonumber\\
 &+\left.
({\mathcal P}_{1,0}^{(0)}{\mathcal Q}_{2,0}^{(0)}+\left({\mathcal P}_{1,0}^{(0)}+{\mathcal P}_{1,1}^{(0)}\right){\mathcal Q}_{2,1}^{(0)})z^2
+\left({\mathcal Q}_{2,0}^{(0)}+{\mathcal Q}_{2,1}^{(0)}\right)z+I\right)P_{1,2}^*(0)
\nonumber
\\
\f_5(z)=
&\left(
             {\mathcal P}_{1,0}^{(0)}{\mathcal Q}_{2,0}^{(0)}
             {\mathcal P}_{1,1}^{(0)}{\mathcal Q}_{2,1}^{(0)}
              {\mathcal P}_{1,2}^{(0)}z^5
+{\mathcal Q}_{2,0}^{(0)}{\mathcal P}_{1,1}^{(0)}{\mathcal Q}_{2,1}^{(0)}{\mathcal P}_{1,2}^{(0)}z^4\right.
\nonumber
\\
&
+
\left({\mathcal P}_{1,0}^{(0)}{\mathcal Q}_{2,0}^{(0)}{\mathcal P}_{1,1}^{(0)}
+
\left({\mathcal P}_{1,0}^{(0)}{\mathcal Q}_{2,0}^{(0)}+\left({\mathcal P}_{1,0}^{(0)}+{\mathcal P}_{1,1}^{(0)}\right){\mathcal Q}_{2,1}^{(0)}
\right){\mathcal P}_{1,2}^{(0)}\right)z^3
\nonumber
\\
&
\left.
+\left({\mathcal Q}_{2,0}^{(0)}{\mathcal P}_{1,1}^{(0)}+\left({\mathcal Q}_{2,0}^{(0)}+{\mathcal Q}_{2,1}^{(0)}\right){\mathcal P}_{1,2}^{(0)}\right)z^2+
\left({\mathcal P}_{0,1}^{(0)}+{\mathcal P}_{1,1}^{(0)}+{\mathcal P}_{1,2}^{(0)}\right)z+I\right)\nonumber
\\
&\cdot Q_{2,2}^*(0).\label{f5PP}
\end{align}
\end{remark}
Since the matrices
\({\mathcal P}_{1,j}^{(0)}\) and
\({\mathcal Q}_{2,j}^{(0)}\) admit explicit representations in terms
of the Schur complements, the preceding identities can be rewritten
entirely in terms of the Schur complements
\(\widehat H_{1,j}\) and \(\widehat H_{2,j}\).

The corresponding formulas in the scalar case for polynomials of
degrees \(1,\ldots,5\) were established in
\cite[Remark~10]{ab2020A}. The following remark extends these
representations to the matrix setting.
\begin{remark}\label{rem4.2}
Let the Schur complements \(\widehat H_{1,j}\) and
\(\widehat H_{2,j}\) be defined as in \eqref{hh1j} and
\eqref{hh2j}, respectively. Then, by Remark~\ref{rem4.3}
and Lemma~\ref{lem3.3}, the HTM polynomials
\(\mathbf f_n\), \(n=1,\ldots,5\), admit the following
representations:
\begin{align}
\mathbf f_1(z)
&=
I_qz+\widehat H_{1,0},\nonumber
\\
\mathbf f_2(z)
&=
I_qz^2+\widehat H_{1,0}z
+\widehat H_{1,0}^{-1}\widehat H_{2,0},\nonumber
\\
\mathbf f_3(z)
&=
I_qz^3+\widehat H_{1,0}z^2
+\left(
\widehat H_{2,0}^{-1}\widehat H_{1,1}
+\widehat H_{1,0}^{-1}\widehat H_{2,0}
\right)z\nonumber
\\
&\quad
+\widehat H_{1,0}\widehat H_{2,0}^{-1}
\widehat H_{1,1},\nonumber
\\
\mathbf f_4(z)
&=
I_qz^4+\widehat H_{1,0}z^3
+\left(
\widehat H_{1,1}^{-1}\widehat H_{2,1}
+\widehat H_{2,0}^{-1}\widehat H_{1,1}
+\widehat H_{1,0}^{-1}\widehat H_{2,0}
\right)z^2\nonumber
\\
&\quad
+\widehat H_{1,0}
\left(
\widehat H_{1,1}^{-1}\widehat H_{2,1}
+\widehat H_{2,0}^{-1}\widehat H_{1,1}
\right)z\nonumber
\\
&\quad
+\widehat H_{1,0}^{-1}\widehat H_{2,0}
\widehat H_{1,1}^{-1}\widehat H_{2,1},\label{HH-5}
\\
\mathbf f_5(z)
&=
I_qz^5+\widehat H_{1,0}z^4\nonumber
\\
&\quad
+\left(
\widehat H_{2,1}^{-1}\widehat H_{1,2}
+\widehat H_{1,1}^{-1}\widehat H_{2,1}
+\widehat H_{2,0}^{-1}\widehat H_{1,1}
+\widehat H_{1,0}^{-1}\widehat H_{2,0}
\right)z^3\nonumber
\\
&\quad
+\widehat H_{1,0}
\left(
\widehat H_{2,1}^{-1}\widehat H_{1,2}
+\widehat H_{1,1}^{-1}\widehat H_{2,1}
+\widehat H_{2,0}^{-1}\widehat H_{1,1}
\right)z^2\nonumber
\\
&\quad
+\left(
\widehat H_{2,0}^{-1}\widehat H_{1,1}
\widehat H_{2,1}^{-1}\widehat H_{1,2}
+\widehat H_{1,0}^{-1}\widehat H_{2,0}
\widehat H_{2,1}^{-1}\widehat H_{1,2}
\right.\nonumber
\\
&\hspace{4cm}\left.
+\widehat H_{1,0}^{-1}\widehat H_{2,0}
\widehat H_{1,1}^{-1}\widehat H_{2,1}
\right)
z\nonumber
\\
&\quad
+\widehat H_{1,0}\widehat H_{2,0}^{-1}
\widehat H_{1,1}\widehat H_{2,1}^{-1}
\widehat H_{1,2}.\nonumber
\end{align}
\end{remark}
Finally, expressing the Schur complements through the Hurwitz
parameters yields the classical Hurwitz parametrization of the HTM
polynomials.

The formulas obtained above will now be illustrated by two explicit
examples.
\section{Examples}\label{sec5}
In this section, we present two examples of HTM polynomials of
degrees \(4\) and \(5\). The first illustrates the representation of
the coefficients in terms of the Schur complements, whereas the second
illustrates their representation in terms of the auxiliary matrices
\({\mathcal P}_{1,j}^{(0)}\) and
\({\mathcal Q}_{2,j}^{(0)}\).
Since both polynomials are HTM polynomials, they are Hurwitz matrix
polynomials.

\begin{example}\label{exa1}
Consider the moment sequence
\[s_0=\begin{pmatrix}2&1\\1&1\end{pmatrix},
\quad
s_1=\begin{pmatrix}13&5\\5&2\end{pmatrix},
\quad
s_2=\begin{pmatrix}274&102\\102&38\end{pmatrix},
\quad
s_3=\begin{pmatrix}7014&2607\\2607&969\end{pmatrix}.
\]
A direct computation shows that the block Hankel matrices
\(H_{1,1}\) and \(H_{2,1}\) are Hermitian positive definite.
Hence, \((s_j)_{j=0}^{3}\) is a Stieltjes positive sequence.
The corresponding Schur complements are
\[
\widehat H_{1,0}=
\begin{pmatrix}
2&1\\
1&1
\end{pmatrix},
\quad
\widehat H_{2,0}=
\begin{pmatrix}
13&5\\
5&2
\end{pmatrix},
\quad
\widehat H_{1,1}
=
\begin{pmatrix}
185&68\\
68&25
\end{pmatrix},
\quad
\widehat H_{2,1}
=
\begin{pmatrix}
1090&403\\
403&149
\end{pmatrix}.
\]
Applying \eqref{HH-5}, the coefficients \(A_j\) of the HTM polynomial
\begin{equation}\label{fn4AA}
\mathbf f_4(z)
=
I_2z^4+A_1z^3+A_2z^2+A_3z+A_4
\end{equation}
are given by
\[
A_1=
\begin{pmatrix}
2&1\\
1&1
\end{pmatrix},
\qquad
A_2=
\begin{pmatrix}
-116&-43\\
391&145
\end{pmatrix},
\]
\[
A_3=
\begin{pmatrix}
146&54\\
270&100
\end{pmatrix},
\qquad
A_4=
\begin{pmatrix}
73&27\\
27&10
\end{pmatrix}.
\]
\end{example}

\begin{example}\label{exa2}
Let
$
w(x)=
\begin{pmatrix}
e^{-x}&-ie^{-x}&e^{-x}\\
ie^{-x}&e^{-x/2}&ie^{-x}\\
e^{-x}&-ie^{-x}&e^{-x/4}
\end{pmatrix},\, x\geq0,
$
and define the matrix-valued measure
$\sigma(B):=\int_B w(x)\,dx$
for every Borel subset \(B\) of \([0,\infty)\).
Then \(\sigma\) is a nonnegative Hermitian matrix-valued measure on
\([0,\infty)\). By \eqref{ss-j}, its moments are
\[
s_j
=
j!
\begin{pmatrix}
1&-i&1\\
i&2^{j+1}&i\\
1&-i&4^{j+1}
\end{pmatrix},
\qquad
j\geq0.
\]
It follows from the integral representations of
\(H_{1,2}\) and \(H_{2,2}\) that both matrices are Hermitian positive
definite.

For this degree-\(5\) example, the auxiliary matrices
\({\mathcal P}_{1,j}^{(0)}\) and
\({\mathcal Q}_{2,j}^{(0)}\), \(j=0,1,2\), are
\[
{\mathcal P}_{1,0}^{(0)}
=
\begin{pmatrix}
\dfrac73&i&-\dfrac13\\
-i&1&0\\
-\dfrac13&0&\dfrac13
\end{pmatrix},
\,
{\mathcal Q}_{2,0}^{(0)}
=
\begin{pmatrix}
1&-i&1\\
i&\dfrac43&i\\
1&-i&\dfrac85
\end{pmatrix},
\,
{\mathcal P}_{1,1}^{(0)}
=
\begin{pmatrix}
\dfrac{839}{255}&\dfrac{9i}{5}&-\dfrac{25}{51}\\
-\dfrac{9i}{5}&\dfrac95&0\\
-\dfrac{25}{51}&0&\dfrac{25}{51}
\end{pmatrix},
\]
\[
{\mathcal Q}_{2,1}^{(0)}
=
\begin{pmatrix}
\dfrac12&-\dfrac{i}{2}&\dfrac12\\
\dfrac{i}{2}&\dfrac{68}{111}&\dfrac{i}{2}\\
\dfrac12&-\dfrac{i}{2}&\dfrac{472}{655}
\end{pmatrix},
\,
{\mathcal P}_{1,2}^{(0)}
=
\begin{pmatrix}
\dfrac{12250171}{2955535}
&
\dfrac{1369i}{545}
&
-\dfrac{17161}{27115}
\\
-\dfrac{1369i}{545}
&
\dfrac{1369}{545}
&
0
\\
-\dfrac{17161}{27115}
&
0
&
\dfrac{17161}{27115}
\end{pmatrix},
\]
and
\[
{\mathcal Q}_{2,2}^{(0)}
=
\begin{pmatrix}
\dfrac13&-\dfrac{i}{3}&\dfrac13\\
\dfrac{i}{3}&\dfrac{80812}{206793}&\dfrac{i}{3}\\
\dfrac13&-\dfrac{i}{3}&\dfrac{5823608}{12891579}
\end{pmatrix}.
\]

Using \eqref{f5PP}, the corresponding HTM polynomial of degree \(5\)
is given by
\begin{equation}\label{fn5}
\mathbf f_5(z)
=
I_3z^5+A_1z^4+A_2z^3+A_3z^2+A_4z+A_5,
\end{equation}
where
\begin{align*}
A_1&=
\begin{pmatrix}
1&-i&1\\
i&2&i\\
1&-i&4
\end{pmatrix},
\quad
A_2=
\begin{pmatrix}
6&\dfrac{264i}{37}&-\dfrac{2464}{131}\\
0&\dfrac{486}{37}&0\\
0&0&\dfrac{3250}{131}
\end{pmatrix},
\\
A_3&=
\begin{pmatrix}
5&-5i&5\\
5i&\dfrac{560}{37}&5i\\
5&-5i&\dfrac{8440}{131}
\end{pmatrix},\quad
A_4=
\begin{pmatrix}
6&\dfrac{936i}{37}&-\dfrac{13152}{131}\\
0&\dfrac{1158}{37}&0\\
0&0&\dfrac{13938}{131}
\end{pmatrix},
\\
A_5&=
\begin{pmatrix}
2&-2i&2\\
2i&\dfrac{292}{37}&2i\\
2&-2i&\dfrac{9832}{131}
\end{pmatrix}.
\end{align*}
Thus, the moment sequence generated by the density \(w\) yields an
explicit HTM polynomial of degree \(5\). Since every HTM polynomial is
Hurwitz, it follows that \(\det\mathbf f_5\) is a Hurwitz polynomial.
\end{example}

\section{Determinants of the coefficients of HTM polynomials}\label{sec6}
In \cite[Remark~13]{ab2026}, it was conjectured that the
determinants of all coefficient matrices of HTM polynomials are
positive for every \(n\ge1\). In this section we prove that this
conjecture holds for HTM polynomials of degree at most \(3\), but
fails in general for degree \(4\). More precisely, we show that every
coefficient matrix \(A_j\) of the HTM polynomials
\(\mathbf f_1\), \(\mathbf f_2\), and \(\mathbf f_3\) satisfies
\[
\det A_j>0,
\]
whereas this property does not hold in general for
\(\mathbf f_4\).

To establish these results, we use the Hurwitz parameters
\((\cc_j)\) and \((\dd_j)\) introduced in
Definition~\ref{defhur}. By \cite[Theorem 7.12]{abH}), these parameters coincide with the
Dyukarev--Stieltjes parameters
\((M_j)\) and \((L_j)\), namely,
\begin{equation}\label{MMLL}
\cc_j=M_j,
\qquad
\dd_j=L_j,
\end{equation}
where
\begin{equation}\label{Mk}
M_k:=
\begin{cases}
s_0^{-1},
& k=0,\\[1ex]
v_k^{*}H_{1,k}^{-1}v_k-
v_{k-1}^{*}H_{1,k-1}^{-1}v_{k-1},
& k\ge1,
\end{cases}
\end{equation}
and
\begin{equation}\label{Lk}
L_k:=
\begin{cases}
s_0\,s_1^{-1}s_0,
& k=0,\\[1ex]
y_{[0,k]}^{*}H_{2,k}^{-1}y_{[0,k]}
-
y_{[0,k-1]}^{*}H_{2,k-1}^{-1}y_{[0,k-1]},
& k\ge1.
\end{cases}
\end{equation}
Here, recall that $H_{1,j}$, $H_{2,j}$ (see \eqref{61}) are positive definite and $v_k$, $y_{[0,k]}$ are defined by
\eqref{60a} and \eqref{65}.

Since the moment sequence \((s_j)_{j\ge0}\) is Stieltjes positive
definite, it follows from Equality~\eqref{MMLL} and
\cite[Remark~4.4]{abH} that each Hurwitz parameter
\(\cc_k\) and \(\dd_k\) is positive definite.

\begin{remark}\label{rem6.1}
Let \((\cc_k)\) and \((\dd_k)\) be the Hurwitz parameters defined by
\eqref{MMLL}, \eqref{Mk}, and \eqref{Lk}.
Expanding the right-hand sides of \eqref{cc13aa} and
\eqref{cc23aa} in powers of \(z\), and using
\eqref{hup02}, \eqref{hn}, and \eqref{gn}, we obtain the following
Hurwitz parametrizations of the HTM polynomials
\(\mathbf f_j\), \(j=1,\ldots,5\):
\begin{align}
\f_1(z)
={}&
I_qz+\cc_0^{-1},
\label{fcof1}
\\
\f_2(z)
={}&
I_qz^2+\cc_0^{-1}z+\dd_0^{-1}\cc_0^{-1},
\label{fcof2}
\\
\f_3(z)
={}&
I_qz^3+\cc_0^{-1}z^2
+(\cc_0+\cc_1)
\cc_1^{-1}\dd_0^{-1}\cc_0^{-1}z
+\cc_1^{-1}\dd_0^{-1}\cc_0^{-1},
\label{fcof3}
\\
\f_4(z)
={}&
I_qz^4+\cc_0^{-1}z^3
+
(\cc_1\dd_1+\cc_0\dd_0+\cc_0\dd_1)
\dd_1^{-1}\cc_1^{-1}\dd_0^{-1}\cc_0^{-1}z^2
\nonumber
\\
&+
(\dd_0+\dd_1)
\dd_1^{-1}\cc_1^{-1}\dd_0^{-1}\cc_0^{-1}z
+
\dd_1^{-1}\cc_1^{-1}\dd_0^{-1}\cc_0^{-1},
\label{fcof4}
\\
\f_5(z)
={}&
I_qz^5+\cc_0^{-1}z^4
\nonumber
\\
&+
(\cc_1\dd_1\cc_2
+\cc_0\dd_0\cc_2
+\cc_0\dd_0\cc_1
+\cc_0\dd_1\cc_2)
\cc_2^{-1}\dd_1^{-1}\cc_1^{-1}
\dd_0^{-1}\cc_0^{-1}z^3
\nonumber
\\
&+
(\dd_0\cc_2+\dd_0\cc_1+\dd_1\cc_2)
\cc_2^{-1}\dd_1^{-1}\cc_1^{-1}
\dd_0^{-1}\cc_0^{-1}z^2
\nonumber
\\
&+
(\cc_0+\cc_1+\cc_2)
\cc_2^{-1}\dd_1^{-1}\cc_1^{-1}
\dd_0^{-1}\cc_0^{-1}z
\nonumber
\\
&+
\cc_2^{-1}\dd_1^{-1}\cc_1^{-1}
\dd_0^{-1}\cc_0^{-1}.
\label{fcof5}
\end{align}
In \cite[Remark~9]{ab2020A}, the corresponding formulas in the scalar
case for polynomials of degrees \(1\) to \(5\) were expressed in terms
of the Dyukarev--Stieltjes parameters \(M_k\) and \(L_k\). Recall
from \eqref{MMLL} that these parameters coincide with
\(\cc_k\) and \(\dd_k\), respectively.
\end{remark}

Using the Hurwitz parametrizations in Remark~\ref{rem6.1}, together
with the positivity of the Hurwitz parameters \(\cc_j\) and
\(\dd_j\), we now prove the main result of this section.

\begin{theorem}\label{t-det}
Let \(\mathbf f_n\) be a \(q\times q\) HTM polynomial.
\begin{enumerate}\item[\textnormal{(a)}]
For every HTM polynomial of degree at most \(3\), all its coefficient
matrices have strictly positive determinants.
\item[\textnormal{(b)}]
The assertion in \textnormal{(a)} fails for degree \(4\). More
precisely, there exists an HTM polynomial of degree \(4\) having a
coefficient matrix with negative determinant.
\end{enumerate}
\end{theorem}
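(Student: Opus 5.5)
For part (a) I will work directly from the Hurwitz parametrizations \eqref{fcof1}--\eqref{fcof3} of Remark~\ref{rem6.1}, together with the positivity of all $\cc_k,\dd_k$ (a consequence of \eqref{MMLL} and \cite[Remark~4.4]{abH}). The key observation is that every coefficient of $\mathbf f_1,\mathbf f_2,\mathbf f_3$ is a product of matrices each of which is positive definite or the inverse of one. By multiplicativity of the determinant, each such coefficient then has positive determinant, since each factor contributes a positive real number.

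The checks are as follows. For $\mathbf f_1$ we have $\det\cc_0^{-1}>0$. For $\mathbf f_2$ we have $\det\cc_0^{-1}>0$ and $\det(\dd_0^{-1}\cc_0^{-1})=(\det\dd_0\det\cc_0)^{-1}>0$. For $\mathbf f_3$, the constant term gives $\det(\cc_1^{-1}\dd_0^{-1}\cc_0^{-1})>0$. The $z$-coefficient is
\[
(\cc_0+\cc_1)\cc_1^{-1}\dd_0^{-1}\cc_0^{-1},
\]
and $\cc_0+\cc_1$ is positive definite as a sum of positive definite matrices, so its determinant is also positive. The leading coefficient $I_q$ is trivial.

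For part (b) I will look at the $z^2$-coefficient of $\mathbf f_4$ in \eqref{fcof4}. Its determinant equals $\det(\cc_1\dd_1+\cc_0\dd_0+\cc_0\dd_1)$ divided by a positive number. The matrix $\cc_1\dd_1+\cc_0\dd_0+\cc_0\dd_1$ is a sum of products of two positive definite matrices. Such products have positive eigenvalues, but their sum need not have positive determinant. This is where the counterexample must live. (The $z$-coefficient $(\dd_0+\dd_1)(\cdots)$ is again positive, so the $z^2$-term is the only candidate.)

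The construction I would try first takes $q=2$ and $\dd_0=\dd_1=\varepsilon I_2$ small, so the middle factor is approximately $\varepsilon(\cc_1+2\cc_0)$, which is positive definite. That choice fails, so I instead make the products highly non-symmetric. I take $\cc_0=\dd_1$-ish rank-one-dominated matrices, for instance
\[
\cc_0=\begin{pmatrix}1&0\\0&\delta\end{pmatrix},\quad
\dd_1=\begin{pmatrix}a&b\\b&c\end{pmatrix}\ \text{nearly singular},\quad
\cc_1,\dd_0 \text{ small},
\]
so that $\cc_0\dd_1$ dominates. Since $\det(\cc_0\dd_1)>0$, a single product is insufficient, so I balance two products. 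A cleaner route is to choose $\cc_1\dd_1$ and $\cc_0\dd_0$ as products of positive definite matrices that approximate rotation-like matrices, for example near $\begin{pmatrix}1&t\\-t&1\end{pmatrix}$-type and its opposite, so that their sum is close to a singular or indefinite matrix. I expect the main obstacle to be this search. I will parametrize diagonal $\cc_0,\cc_1$ and $2\times2$ symmetric $\dd_0,\dd_1$ with strongly misaligned eigenvectors, compute the $2\times2$ determinant symbolically, and pick rational values making it negative.

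Having found positive definite $\cc_0,\cc_1,\dd_0,\dd_1$, I invoke Definition~\ref{defhur} directly. The continued fraction with these parameters defines $\mathbf g_4/\mathbf h_4$, and \eqref{fcof4} gives an HTM polynomial of degree $4$ whose coefficient $A_2$ has negative determinant. Alternatively, I can recover a Stieltjes positive moment sequence via \eqref{Mk}--\eqref{Lk} and present the example through Remark~\ref{rem4.1}, verifying $\det A_2<0$ numerically with exact rational arithmetic.
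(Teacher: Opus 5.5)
Your part (a) is exactly the paper's argument: read off the coefficients from \eqref{fcof1}--\eqref{fcof3} and use multiplicativity of the determinant, together with $\cc_0+\cc_1>0$. Your reduction in (b) is also correct. By \eqref{fcof4}, $\det A_2$ has the sign of $\det(\cc_1\dd_1+\cc_0\dd_0+\cc_0\dd_1)$, and $A_1,A_3,A_4$ always have positive determinant. Invoking Definition~\ref{defhur} directly for arbitrary positive definite $\cc_0,\cc_1,\dd_0,\dd_1$ is legitimate, since \eqref{fcof4} is obtained by expanding that continued fraction. The paper instead passes through the moments of Example~\ref{exa1}, which is not needed.

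\textbf{The gap in (b).} Part (b) is an existence statement, and you never produce a witness or prove that one exists. The proposal stops at a search plan (``pick rational values making it negative''), so (b) is not proved.

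The heuristic you favour also cannot work as stated. The product $\cc_1\dd_1=\cc_1^{1/2}(\cc_1^{1/2}\dd_1\cc_1^{1/2})\cc_1^{-1/2}$ has positive real spectrum. It therefore cannot approximate $\begin{pmatrix}1&t\\-t&1\end{pmatrix}$ with $t\neq 0$, whose eigenvalues are $1\pm it$. Even if it could, that matrix plus its ``opposite'' $\begin{pmatrix}1&-t\\t&1\end{pmatrix}$ is $2I$.

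\textbf{What actually works.} For $2\times 2$ matrices,
\[
\det(X+Y)=\det X+\det Y+\operatorname{tr}\bigl(\operatorname{adj}(X)\,Y\bigr),
\]
and the mixed term can be very negative even though $\det X,\det Y>0$. It also helps to write $\cc_0\dd_0+\cc_0\dd_1=\cc_0(\dd_0+\dd_1)$. The paper's choice is
\[
\cc_0=\begin{pmatrix}1&-1\\-1&2\end{pmatrix},\quad
\cc_1=\begin{pmatrix}1&1\\1&2\end{pmatrix},\quad
\dd_0=\begin{pmatrix}1&2\\2&5\end{pmatrix},\quad
\dd_1=\begin{pmatrix}1&-2\\-2&5\end{pmatrix},
\]
all positive definite with determinant $1$. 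Then $\dd_0+\dd_1=\operatorname{diag}(2,10)$ and
\[
\cc_1\dd_1=\begin{pmatrix}-1&3\\-3&8\end{pmatrix},\qquad
\cc_0(\dd_0+\dd_1)=\begin{pmatrix}2&-10\\-2&20\end{pmatrix}.
\]
Their sum is $\begin{pmatrix}1&-7\\-5&28\end{pmatrix}$, with determinant $1+20-28=-7$. Since $\det(\dd_1^{-1}\cc_1^{-1}\dd_0^{-1}\cc_0^{-1})=1$, this gives $\det A_2=-7$. Supplying an explicit example of this kind, verified exactly, is what your part (b) is missing.
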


\begin{proof}
\textnormal{(a)}
Using the Hurwitz parametrization given in
Remark~\ref{rem6.1}, together with the positivity of the
Hurwitz parameters \(\cc_k\) and \(\dd_k\), we conclude that all
coefficient matrices of the HTM polynomials
\(\mathbf f_1\), \(\mathbf f_2\), and \(\mathbf f_3\)
have strictly positive determinants.

Indeed, for \(\mathbf f_3\),
\[
\det A_2
=
\det(\cc_0+\cc_1)\,
\det(\cc_1^{-1}\dd_0^{-1}\cc_0^{-1})>0,
\]
since \(\cc_0+\cc_1\) is positive definite, while
\(\cc_1^{-1}\), \(\dd_0^{-1}\), and
\(\cc_0^{-1}\) are positive definite and therefore have positive
determinants.

\medskip

\textnormal{(b)}
Consider the HTM polynomial $\mathbf f_4$ from
Example~\ref{exa1}. Its Hurwitz parameters are
\[
\cc_0=
\begin{pmatrix}
1&-1\\
-1&2
\end{pmatrix},
\,
\cc_1=
\begin{pmatrix}
1&1\\
1&2
\end{pmatrix},
\,
\dd_0=
\begin{pmatrix}
1&2\\
2&5
\end{pmatrix},
\,
\dd_1=
\begin{pmatrix}
1&-2\\
-2&5
\end{pmatrix}.
\]
Each of these matrices is positive definite and has determinant equal
to \(1\).

Using the representation \eqref{fcof4}, we obtain
\[
A_2
=
(\cc_1\dd_1+\cc_0\dd_0+\cc_0\dd_1)
\dd_1^{-1}\cc_1^{-1}\dd_0^{-1}\cc_0^{-1}
=
\begin{pmatrix}
-116&-43\\
391&145
\end{pmatrix},
\]
and therefore
\begin{equation}\label{A27}
\det(A_2)=-7.
\end{equation}
On the other hand,
$
\det(I_2)=1,\,
\det(A_1)=1,\,
\det(A_3)=20,\,
\det(A_4)=1.
$
Hence, the assertion of part \textnormal{(a)} does not extend to HTM
polynomials of degree \(4\).
\end{proof}

\section{Block Hurwitz matrices and Markov parameters}\label{sec7}
In this section, we establish explicit relationships between the block
Hurwitz matrix associated with an HTM polynomial, its Markov
parameters, and the corresponding block Hankel matrices. We first show
how the Markov parameters can be recovered from the coefficients of an
HTM polynomial. We then derive factorizations of the block Hurwitz
matrix and determinant identities that extend the corresponding
relations from the classical scalar Routh--Hurwitz theory.

The following lemma shows that the Markov parameters can be
recovered uniquely from the coefficients of an HTM polynomial.
Its scalar counterpart was established in~\cite[Lemma~3.1]{ab2018}.
\begin{lemma}\label{lem-sj}
Let \((A_j)_{j=0}^{n}\) be the coefficients of the HTM polynomial
\(\mathbf f_n\) defined by \eqref{hup01}, where \(A_0=I_q\).
Furthermore, let \(h_n\) and \(g_n\) be as in
\eqref{hn} and \eqref{gn}, respectively.

If \(n=2m\), then the Markov parameters
\((s_j)_{j=0}^{2m-1}\) are determined by
\begin{align}
\Ab_{2m}(s_0,s_1,\ldots,s_{2m-1})^*
&=
(A_1,A_3,\ldots,A_{2m-1},0_q,\ldots,0_q)^*.
\label{pq11}
\end{align}

If \(n=2m+1\), then the Markov parameters
\((s_j)_{j=0}^{2m}\) are determined by
\begin{align}
\Ab_{2m+1}(s_0,s_1,\ldots,s_{2m})^*
&=
(A_1,A_3,\ldots,A_{2m+1},0_q,\ldots,0_q)^*.
\label{pq22}
\end{align}

Here, for \(n\geq2\),
\[
\Ab_n
:=
\begin{pmatrix}
I_q & 0_q & \cdots & 0_q & 0_q\\
A_2^* & -I_q & \cdots & 0_q & 0_q\\
A_4^* & -A_2^* & \ddots & 0_q & 0_q\\
\vdots & \vdots & \ddots & \ddots & 0_q\\
A_{2(n-1)}^* & -A_{2(n-2)}^*
& \cdots & (-1)^nA_2^* & (-1)^{n+1}I_q
\end{pmatrix},
\]
where \(A_k:=0_q\) for \(k>n\).
\end{lemma}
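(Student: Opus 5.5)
The plan is to start from the Laurent expansions in Remark~\ref{rem2.1} and clear denominators, so that the identity becomes a polynomial identity in $z$. Then we compare coefficients of negative (or nonpositive) powers of $z$, exactly as in the scalar argument of \cite[Lemma~3.1]{ab2018}. Consider first $n=2m$. Here $\g_n/\h_n=\g_n\h_n^{-1}$ is a right quotient, so I multiply \eqref{hup09} on the right by $\h_n(z)$ and obtain
\[
\g_n(z)=\Bigl(\sum_{k\ge0}(-1)^k s_k z^{-k-1}\Bigr)\h_n(z),
\qquad \h_n(z)=\sum_{i=0}^{m}A_{2i}z^{m-i},
\]
with $A_0=I_q$. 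The right-hand side is $\sum_{\ell}z^{m-1-\ell}\sum_{i+k=\ell}(-1)^k s_kA_{2i}$.

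Next I compare coefficients. For $\ell=0,\dots,m-1$ the power $z^{m-1-\ell}$ carries the coefficient $A_{2\ell+1}$ of $\g_n$. For $\ell=m,\dots,2m-1$ the negative powers must vanish. This yields the $2m$ block equations
\[
\sum_{k=0}^{\ell}(-1)^k s_k A_{2(\ell-k)}=\begin{cases}A_{2\ell+1},&0\le\ell\le m-1,\\ 0_q,&m\le \ell\le 2m-1,\end{cases}
\]
using the convention $A_k=0_q$ for $k>n$. Taking adjoints gives $\sum_k(-1)^kA_{2(\ell-k)}^*s_k^*=A_{2\ell+1}^*$ (or $0_q$). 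Row $\ell$ of $\Ab_{2m}$ has the entries $A_{2\ell}^*,-A_{2(\ell-1)}^*,\dots,(-1)^\ell I_q$, so these equations are exactly \eqref{pq11}. The matrix $\Ab_{2m}$ is block lower triangular with diagonal blocks $\pm I_q$, hence invertible, and forward substitution determines $s_0,\dots,s_{2m-1}$ uniquely.

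For $n=2m+1$ I do the same with \eqref{hup10}: $\h_n(z)=\bigl(\sum_k(-1)^ks_kz^{-k}\bigr)\g_n(z)$, where now $\g_n$ has the coefficients $A_{2i}$ and $\h_n$ has the coefficients $A_{2i+1}$. Matching $z^{m-\ell}$ for $\ell=0,\dots,m$ gives $A_{2\ell+1}$, and the negative powers for $\ell=m+1,\dots,2m$ vanish. This produces \eqref{pq22} with $\Ab_{2m+1}$ of size $(2m+1)q$.

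The main obstacle is bookkeeping rather than anything conceptual, and it has two parts:
\begin{itemize}
\item[(i)] The ordering of the products. The quotient convention $A/B=AB^{-1}$ places $s_k$ to the left of $A_{2i}$, which is why the adjoints, and hence the entries $A_{2i}^*$ in $\Ab_n$, appear.
\item[(ii)] The size convention. The displayed $\Ab_n$ has block size $n$ and last diagonal entry $(-1)^{n+1}I_q$, which matches the sign pattern $(-1)^k$ on $s_k$. I will check the indices against the $n=2,3$ cases of Remark~\ref{rem4.101}, for instance $s_1=A_2A_1$, i.e. $s_1^*=A_1^*A_2^*$ from the row $A_2^*s_0^*-s_1^*=0$ when $n=2$.
\end{itemize}
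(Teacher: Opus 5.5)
Your proposal is correct and is essentially the paper's proof. You right-multiply the Laurent expansion by $\h_n$ (resp.\ by $z\g_n$), match coefficients to get the triangular system $\sum_{k=0}^{\ell}(-1)^k s_kA_{2(\ell-k)}=A_{2\ell+1}$ or $0_q$, take adjoints, and use that $\Ab_n$ is block lower triangular with diagonal blocks $\pm I_q$.

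There is one slip, in your sanity check. The row $A_2^*s_0^*-s_1^*=0$ gives $s_1^*=A_2^*A_1^*$, i.e.\ $s_1=A_1A_2$, which also follows directly from $A_1(zI_q+A_2)^{-1}=A_1z^{-1}-A_1A_2z^{-2}+\cdots$. So the entry $s_1=A_2A_1$ for $n=2$ in Remark~\ref{rem4.101} is a misprint, not a confirmation of your indexing.
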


\begin{proof}
We first consider the case \(n=2m\). By \eqref{hn} and \eqref{gn},
\[
h_{2m}(z)
=
\sum_{r=0}^{m}A_{2r}z^{m-r},
\qquad
g_{2m}(z)
=
\sum_{r=0}^{m-1}A_{2r+1}z^{m-1-r}.
\]
Substituting these expressions into \eqref{hup09}, multiplying by
\(h_{2m}(z)\), and comparing coefficients of equal powers of \(z\),
we obtain the triangular system
\[
\sum_{r=0}^{\ell}
(-1)^{\ell-r}s_{\ell-r}A_{2r}
=
\begin{cases}
A_{2\ell+1},
&0\leq\ell\leq m-1,\\
0_q,
&m\leq\ell\leq2m-1.
\end{cases}
\]
Taking adjoints and collecting these identities for
\(\ell=0,\ldots,2m-1\) yields precisely the block system
\eqref{pq11}.

The proof of \eqref{pq22} is analogous. Starting from
\eqref{hup10}, substituting the corresponding expressions for
\(h_{2m+1}\) and \(g_{2m+1}\), and comparing coefficients of equal
powers of \(z\), we obtain the associated triangular system.
Taking adjoints and collecting the resulting identities gives
\eqref{pq22}.

Finally, \(\Ab_n\) is block lower triangular with invertible diagonal
blocks \(I_q\) and \(-I_q\). Hence, \(\Ab_n\) is invertible, and the
systems \eqref{pq11} and \eqref{pq22} uniquely determine the stated
Markov parameters.
\end{proof}
Since the matrix \(\mathcal A_n\) is block lower triangular,
\begin{equation}\label{AA-11}
\det\mathcal A_{2m}=(-1)^{qm}, \quad \mbox{and} \quad \det\mathcal A_{2m+1}=(-1)^{qm}.
\end{equation}
Let \(n\geq2\), and let \(\mathbf f_n\) be the HTM polynomial
defined by \eqref{hup01}. 
Following the classical scalar theory, we define the associated
\(nq\times nq\) block Hurwitz matrix by
\begin{equation}\label{hffn}
H_{f}^{(n)}:=
\left(
\begin{array}{cccccc}
A_1 & A_3 & A_5 & \ldots & \ldots & 0\\
A_0 & A_2 & A_4 & \ldots & \ldots & 0\\
0 & A_1 & A_3 & \ldots & \ldots & 0\\
0 & A_0 & A_2 & \ldots & \ldots & 0\\
0 & 0 & A_1 & \ldots & \ldots & 0\\
0 & 0 & A_0 & \ldots & \ldots & 0\\
\ldots & \ldots & \ldots & \ldots & \ldots & \ldots\\
0 & 0 & 0 & \ldots & A_{n-1} & 0\\
0 & 0 & \ldots & \ldots & A_{n-2} & A_n
\end{array}
\right).
\end{equation}
Here \(A_r=0\) for \(r>n\). The matrix \(H_f^{(n)}\) is called the
\emph{block Hurwitz matrix} associated with the HTM polynomial
\(\mathbf f_n\).

We shall also use the following block matrices:
\begin{align}
{\mathcal L}_{2m}
:=
&
\left(
\begin{array}{cccccc}
s_0 & s_1 & s_2 & \ldots & \ldots & s_{2m-1}\\
I_q & 0_q & 0_q & \ldots & \ldots & 0_q\\
0_q & -s_0 & -s_1 & \ldots & \ldots & -s_{2m-2}\\
0_q & -I_q & 0_q & \ldots & \ldots & 0_q\\
\ldots & \ldots & \ldots & \ldots & \ldots & \ldots\\
0_q & 0_q & \ldots & (-1)^{m-1}s_0 & \ldots & (-1)^{m-1}s_m\\
0_q & 0_q & \ldots & (-1)^{m-1}I_q & \ldots & 0_q
\end{array}
\right),
\label{LL2ma}
\\
{\mathcal L}_{2m+1}
:=
&
\left(
\begin{array}{cccccc}
s_0 & s_1 & s_2 & \ldots & \ldots & s_{2m}\\
I_q & 0_q & 0_q & \ldots & \ldots & 0_q\\
0_q & -s_0 & -s_1 & \ldots & \ldots & -s_{2m-1}\\
0_q & -I_q & 0_q & \ldots & \ldots & 0_q\\
\ldots & \ldots & \ldots & \ldots & \ldots & \ldots\\
0_q & 0_q & \ldots & (-1)^m s_0 & \ldots & (-1)^m s_m
\end{array}
\right),
\label{LL2mp1a}
\end{align}
together with
\begin{equation}\label{S0mL2m}
S^{[0,m]}
:=
\left(
\begin{array}{ccccc}
0_q & 0_q & \ldots & 0_q & s_0\\
0_q & 0_q & \ldots & s_0 & s_1\\
\ldots & \ldots & \ldots & \ldots & \ldots\\
s_0 & s_1 & \ldots & s_{m-1} & s_m
\end{array}
\right),
\end{equation}
and
\begin{align}\label{Laa2m}
\Lambda_{2,m}
:=
\left(
\delta_{j,k}I_q
\right)_{
\tiny
\begin{array}{c}
j=0,\ldots,m\\
k=0,\ldots,m-1
\end{array}
}.
\end{align}
Here \(\delta_{j,k}\) denotes the Kronecker delta.

\begin{definition}\label{def:Tmatrices}
For \(m\geq1\), define
\begin{equation}\label{TT01}
{\mathcal T}_{2m}:=D_{2m}\widehat P_{2m},\qquad{\mathcal T}_{2m+1}:=D_{2m+1}\widehat P_{2m+1},
\end{equation}
where
$
\widehat P_{2m}:=
\left(p_{ij}I_q\right)_{i,j=1}^{2m},
\qquad
\widehat P_{2m+1}:=
\left(q_{ij}I_q\right)_{i,j=1}^{2m+1},
$
with
\[
p_{ij}:=
\begin{cases}
1,
& 1\leq i\leq m,\quad j=2i,\\[1ex]
1,
& m+1\leq i\leq2m,\quad j=4m-2i+1,\\[1ex]
0,
& \text{otherwise},
\end{cases}
\]
and
\[
q_{ij}:=
\begin{cases}
1,
& 1\leq i\leq m,\quad j=2i,\\[1ex]
1,
& m+1\leq i\leq2m+1,\quad j=4m-2i+3,\\[1ex]
0,
& \text{otherwise}.
\end{cases}
\]
Furthermore, let
$
D_{2m}
:=
\operatorname{diag}
\left(d_1^{(e)},\ldots,d_{2m}^{(e)}\right),
$
where
\[
d_i^{(e)}:=
\begin{cases}
(-1)^{i-1}I_q,
&1\leq i\leq m,\\[1ex]
(-1)^{2m-i}I_q,
&m+1\leq i\leq2m,
\end{cases}
\]
and let
$
D_{2m+1}
:=
\operatorname{diag}
\left(d_1^{(o)},\ldots,d_{2m+1}^{(o)}\right),
$
where
\[
d_i^{(o)}:=
\begin{cases}
(-1)^{i-1}I_q,
&1\leq i\leq m,\\[1ex]
(-1)^mI_q,
&i=m+1,\\[1ex]
(-1)^{2m+1-i}I_q,
&m+2\leq i\leq2m+1.
\end{cases}
\]
\end{definition}

The matrices \(\widehat P_{2m}\) and \(\widehat P_{2m+1}\) are the
block permutation matrices corresponding to the row reorderings in the
even and odd cases, respectively. The diagonal matrices
\(D_{2m}\) and \(D_{2m+1}\) eliminate the alternating sign pattern in
the reordered block rows. The following lemmas summarize the basic
algebraic properties of the resulting matrices \(\mathcal T_n\).

The matrices \(\mathcal T_n\) are unitary and have explicitly
computable determinants, as shown below.

\begin{lemma}\label{lem:Tunitary}
For every \(n\ge2\),
\[
\mathcal T_n^{-1}=\mathcal T_n^*.
\]
Equivalently,
\begin{equation}
\mathcal T_n^*\mathcal T_n
=
\mathcal T_n\mathcal T_n^*
=
I_{nq}.
\label{eq7.4C}
\end{equation}
\end{lemma}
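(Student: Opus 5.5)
The plan is to show separately that $\widehat P_n$ and $D_n$ are unitary, and then use that products of unitary matrices are unitary. By Definition~\ref{def:Tmatrices}, $\mathcal T_n=D_n\widehat P_n$. Once we know $D_n^*D_n=D_nD_n^*=I_{nq}$ and $\widehat P_n^*\widehat P_n=\widehat P_n\widehat P_n^*=I_{nq}$, we get
\[
\mathcal T_n^*\mathcal T_n=\widehat P_n^*D_n^*D_n\widehat P_n=\widehat P_n^*\widehat P_n=I_{nq}.
\]
The identity $\mathcal T_n\mathcal T_n^*=I_{nq}$ follows in the same way. Hence $\mathcal T_n^{-1}=\mathcal T_n^*$.

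The diagonal factor is immediate. Each diagonal block $d_i^{(e)}$ or $d_i^{(o)}$ equals $\pm I_q$, so $D_n$ is real, Hermitian, and satisfies $D_n^2=I_{nq}$. Thus $D_n^*D_n=D_n^2=I_{nq}$.

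The main step is to check that $\widehat P_n=(p_{ij}I_q)$ (resp.\ $(q_{ij}I_q)$) is a genuine block permutation matrix. This holds iff the index map $i\mapsto j(i)$ defined by the nonzero entries is a bijection of $\{1,\dots,n\}$. In that case every block row and block column contains exactly one $I_q$, and $\widehat P_n^*\widehat P_n=I_{nq}$ because $\widehat P_n^{\,*}$ is the inverse permutation.

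\emph{Even case $n=2m$.} For $1\le i\le m$ the map gives $j=2i$, which runs over the even indices $2,4,\dots,2m$. For $m+1\le i\le 2m$ it gives $j=4m-2i+1$, which runs over $2m-1,2m-3,\dots,1$, i.e.\ all odd indices.

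\emph{Odd case $n=2m+1$.} For $1\le i\le m$ we again get $j=2i$, covering $2,\dots,2m$. For $m+1\le i\le 2m+1$ we get $j=4m-2i+3$, covering $2m+1,2m-1,\dots,1$, i.e.\ all odd indices in $\{1,\dots,2m+1\}$.

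In both cases each column index is hit exactly once, which is the only point requiring care. Then
\[
(\widehat P_n^*\widehat P_n)_{kl}=\sum_i p_{ik}p_{il}I_q=\delta_{kl}I_q,
\]
and similarly $\widehat P_n\widehat P_n^*=I_{nq}$, which completes the argument.
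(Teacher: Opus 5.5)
Your proposal is correct and follows the paper's argument: the paper likewise writes $\mathcal T_n=D_n\widehat P_n$ and uses $D_n=D_n^*=D_n^{-1}$ together with $\widehat P_n^{-1}=\widehat P_n^*$. The one addition is that you explicitly check that the index maps $i\mapsto 2i$ and $i\mapsto 4m-2i+1$ (resp.\ $4m-2i+3$) are bijections onto $\{1,\dots,n\}$, which confirms that $\widehat P_n$ is a block permutation matrix, a fact the paper takes for granted.
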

\begin{proof}
Since \(D_n=D_n^*=D_n^{-1}\) and
\(\widehat P_n^{-1}=\widehat P_n^*\),
the result follows immediately from
\(\mathcal T_n=D_n\widehat P_n\).
\end{proof}

\begin{lemma}\label{lem:Tdet}
For every \(m\ge1\),
\begin{equation}\det\mathcal T_{2m}=\det\mathcal T_{2m+1}=(-1)^{qm}.\label{eq7.4}
\end{equation}
\end{lemma}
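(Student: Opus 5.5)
The plan is to use $\mathcal T_n=D_n\widehat P_n$ from Definition~\ref{def:Tmatrices} and multiplicativity of the determinant, so that $\det\mathcal T_n=\det D_n\cdot\det\widehat P_n$. I will compute the two factors separately.

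\emph{Diagonal factor.} $D_n$ is block diagonal with blocks $\pm I_q$, so $\det D_n=(-1)^{qN_n}$, where $N_n$ is the number of indices $i$ with $d_i=-I_q$.
\begin{itemize}
\item[(i)] For $n=2m$, the first half $1\le i\le m$ contributes the even $i$, that is, $\lfloor m/2\rfloor$ indices. The second half $m+1\le i\le 2m$ contributes the $i$ with $2m-i$ odd. A parity check on $m$ gives $N_{2m}=m$ when $m$ is even and $N_{2m}=m-1$ when $m$ is odd.
\item[(ii)] For $n=2m+1$, the same count applies to the two outer ranges, and the middle block contributes one more index exactly when $m$ is odd.
\end{itemize}
I will record the resulting sign $\det D_n$ explicitly in each of the four parity cases.

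\emph{Permutation factor.} $\widehat P_n=(P_\sigma)\otimes I_q$ for a scalar permutation matrix $P_\sigma$, so $\det\widehat P_n=(\operatorname{sgn}\sigma)^q$.
\begin{itemize}
\item[(i)] For $n=2m$, $\sigma$ sends $i\mapsto 2i$ for $i\le m$, listing the even positions increasingly. It sends $i\mapsto 4m-2i+1$ for $i>m$, listing the odd positions $2m-1,2m-3,\dots,1$ decreasingly.
\item[(ii)] For $n=2m+1$, the evens $2,\dots,2m$ again appear first, followed by the odds $2m+1,\dots,1$ in decreasing order.
\end{itemize}
I will count inversions directly.
\begin{itemize}
\item[(i)] Pairs of evens give no inversions.
\item[(ii)] Pairs of odds are fully reversed, giving $\binom{m}{2}$ inversions (respectively $\binom{m+1}{2}$ in the odd case).
\item[(iii)] An even $2k$ preceding an odd $2l-1$ is an inversion iff $2l-1<2k$, that is, $l\le k$. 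This gives $\sum_{k=1}^m k=\binom{m+1}{2}$ mixed inversions in both cases.
\end{itemize}
Hence $\operatorname{sgn}\sigma=(-1)^{\binom m2+\binom{m+1}2}=(-1)^{m^2}=(-1)^m$ for $n=2m$. For $n=2m+1$ the sign is $(-1)^{2\binom{m+1}{2}}=+1$.

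\emph{Combining.} I then multiply $(-1)^{qN_n}$ by $(\operatorname{sgn}\sigma)^q$ and reduce the exponent mod $2$ in each parity case of $m$. The target is the exponent $qm$ claimed in \eqref{eq7.4}. The main obstacle is purely bookkeeping: getting the sign counts of $D_n$ exactly right at the boundary indices ($i=m$, $i=m+1$, and the middle block in the odd case). A miscount there flips the answer. To guard against this, I would sanity-check against the smallest cases $m=1,2$ (for instance $\mathcal T_2$, $\mathcal T_3$ written out explicitly) before writing the general parity argument. If the naive count disagrees with $(-1)^{qm}$, I will recheck which half of the index range receives the reversed ordering in Definition~\ref{def:Tmatrices}.
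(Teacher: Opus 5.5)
Your proposal is correct and follows the paper's proof: you factor $\det\mathcal T_n=\det D_n\,\det\widehat P_n$, count $m^2$ and $m(m+1)$ inversions for the underlying permutations, and evaluate the diagonal signs. Your counts already finish the argument, so the fallback recheck is unnecessary: $N_{2m}=2\lfloor m/2\rfloor$ is always even and $N_{2m+1}\equiv m \pmod 2$, which give $\det D_{2m}=1$ and $\det D_{2m+1}=(-1)^{qm}$, and multiplying by $(-1)^{qm}$ and $1$ respectively yields the claim.
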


\begin{proof}
Since
$
\mathcal T_n=D_n\widehat P_n,
$
we have
$
\det\mathcal T_n=\det D_n\,\det\widehat P_n.
$

For \(n=2m\), the permutation underlying \(\widehat P_{2m}\) has
\(m^2\) inversions. Hence
\[
\det\widehat P_{2m}=(-1)^{qm^2}.
\]
Moreover,
$
\det D_{2m}=(-1)^{qm(m-1)}=1.
$
Therefore,
\[
\det\mathcal T_{2m}
=(-1)^{qm^2}
=(-1)^{qm}.
\]

For \(n=2m+1\), the permutation underlying
\(\widehat P_{2m+1}\) has \(m(m+1)\) inversions, and thus
\[
\det\widehat P_{2m+1}=(-1)^{qm(m+1)}=1.
\]
Also,
$
\det D_{2m+1}
=(-1)^{qm^2}
=(-1)^{qm}.
$
Consequently,
\[
\det\mathcal T_{2m+1}=(-1)^{qm}.
\]
\end{proof}

The following theorem establishes explicit relations among the block
Hurwitz matrix, the Markov parameters, and the associated block Hankel
matrices. In the scalar case, analogous identities were obtained in
\cite[Subsection~3.1 and Theorem~3.4]{ab2018}. Here we give their
matrix counterparts.
We also correct several typographical errors appearing in the proof of
\cite[Theorem~3.4]{ab2018}.
\begin{theorem}\label{th2.13}
Let \(n\geq2\), and let \(H_f^{(n)}\),
\(\mathcal L_{2m}\), and \(\mathcal L_{2m+1}\) be defined by
\eqref{hffn}, \eqref{LL2ma}, and \eqref{LL2mp1a}, respectively.

If \(n=2m\), then
\begin{align}
\bigl(H_f^{(2m)}\bigr)^*
&=
\Ab_{2m}\mathcal L_{2m}^*,
\label{hff2m}
\\
\det\!\bigl(\bigl(H_f^{(2m)}\bigr)^*\bigr)
&=
\det H_{2,m-1}.
\label{hff2mA}
\end{align}

If \(n=2m+1\), then
\begin{align}
\bigl(H_f^{(2m+1)}\bigr)^*
&=
\Ab_{2m+1}\mathcal L_{2m+1}^*,
\label{hff2m1}
\\
\det\!\bigl(\bigl(H_f^{(2m+1)}\bigr)^*\bigr)
&=
\det H_{1,m}.
\label{hff2m1A}
\end{align}
\end{theorem}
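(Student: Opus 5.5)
The factorization identities \eqref{hff2m} and \eqref{hff2m1} are purely algebraic consequences of the relations between coefficients and Markov parameters. I would prove them by block multiplication, checking column by column of $(H_f^{(n)})^*$. The $k$-th block column of $\mathcal L_n^*$ is the adjoint of the $k$-th block row of $\mathcal L_n$. Each such row is either a shifted moment row $(0,\dots,0,(-1)^{r}s_0,(-1)^r s_1,\dots)$ or a shifted unit row $(0,\dots,0,(-1)^rI_q,0,\dots)$.

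For a unit row, the product with $\Ab_n$ simply extracts a column of $\Ab_n$ with a sign. This is the adjoint of a shifted $(A_0,A_2,A_4,\dots)$ row of $H_f^{(n)}$, up to the sign pattern built into $\Ab_n$. The alternating signs $(-1)^{j}$ in the columns of $\Ab_n$ and the $(-1)^r$ in $\mathcal L_n$ should cancel, and I would check this directly.

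For a moment row, the product gives $\sum_r (\pm)A_{2r}^* s_{\ell-r}^*$. By the triangular system obtained in the proof of Lemma~\ref{lem-sj}, this sum equals $A_{2\ell+1}^*$ for the appropriate $\ell$, and equals $0$ beyond the range. That matches a shifted $(A_1,A_3,\dots)$ row of $H_f^{(n)}$. The shifts present no new difficulty: the identities of Lemma~\ref{lem-sj} hold for every $\ell$, and $\Ab_n$ is block Toeplitz lower triangular, so a shifted row of $\mathcal L_n$ reproduces the same relation with the index shifted. The odd case is identical, using \eqref{hup10} and the system behind \eqref{pq22}.

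For the determinant identities, I would take determinants in the factorizations and use \eqref{AA-11}, so that $\det (H_f^{(n)})^*=(-1)^{qm}\,\overline{\det\mathcal L_n}$. It remains to compute $\det\mathcal L_n$. Here I would use the permutation matrices $\mathcal T_n$ of Definition~\ref{def:Tmatrices}. Left-multiplying $\mathcal L_n$ by $\mathcal T_n$ reorders the block rows: all unit rows go first, with signs removed by $D_n$, and then the moment rows, in reversed order and with signs removed.

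In the even case the resulting matrix should be block triangular of the form
\[
\begin{pmatrix} \Lambda_{2,m}^{*}\text{-type identity block} & 0\\ \ast & H_{2,m-1}\end{pmatrix}
\]
after eliminating the columns hit by the unit rows. The unit rows pick out the odd-indexed block columns $0,2,4,\dots$, leaving the moments $s_1,s_2,\dots$ in the remaining columns, and reversing the order of the moment rows produces the Hankel matrix $H_{2,m-1}$ (possibly via $S^{[0,m]}$-type staircase blocks). Expanding along the identity rows then gives $\det(\mathcal T_{2m}\mathcal L_{2m})=\pm\det H_{2,m-1}$.

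In the odd case there are $m$ unit rows and $m+1$ moment rows. The surviving columns are $0,2,\dots,2m$ with entries $s_0,\dots,s_{2m}$, which yields $H_{1,m}$. Combining with Lemma~\ref{lem:Tdet}, the factor $(-1)^{qm}$ should cancel against $\det\Ab_n$. Since $H_{r,j}$ is Hermitian positive definite, its determinant is real, so conjugation is harmless.

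The main obstacle is the sign bookkeeping. Three sources of sign must be tracked together: the column elimination, the row reordering by $\mathcal T_n$ versus the Laplace expansion, and the relation between the reversed staircase and the Hankel form. I would first fix these by checking $m=1$ (that is, $n=2,3$) explicitly, and then argue the general pattern inductively on $m$.
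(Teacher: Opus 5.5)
Your proposal follows the paper's proof. The factorizations \eqref{hff2m} and \eqref{hff2m1} come from block-column comparison using the triangular system behind Lemma~\ref{lem-sj}. The determinant identities come from writing $\mathcal L_n^*=(\mathcal T_n\mathcal L_n)^*\mathcal T_n$, recognizing $\mathcal T_n\mathcal L_n$ as block lower triangular with diagonal blocks $I_{qm}$ and a Hankel matrix, and using $\det\Ab_n\det\mathcal T_n=1$ from \eqref{AA-11} and Lemma~\ref{lem:Tdet}.

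One statement in your determinant step is wrong, and the Hankel structure depends on it. The $k$-th unit row of $\mathcal L_n$ is $(-1)^{k-1}I_q$ in block column $k$, $k=1,\dots,m$. So the unit rows occupy the first $m$ consecutive block columns, not alternate columns $0,2,4,\dots$; in the odd case the surviving columns are the last $m+1$, not $0,2,\dots,2m$. If alternate columns were eliminated, the first moment row would leave $s_1,s_3,\dots$, which is not Hankel.

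The correct picture is as follows. Restricted to the last $m$ (resp.\ $m+1$) block columns, the moment rows, after $D_n$ removes their signs and taken in reverse order, give exactly $H_{2,m-1}$ (resp.\ $H_{1,m}$). The lower-left block is $S^{[0,m-1]}$ (resp.\ $S^{[0,m]}\Lambda_{2,m}$). Hence $\det(\mathcal T_n\mathcal L_n)$ equals the Hankel determinant with no residual $\pm$, and no induction on $m$ is needed for the signs.

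A smaller point: $\Ab_n$ is not literally block Toeplitz, since its $j$-th block column carries the factor $(-1)^{j-1}$. Your shift argument for the moment columns still works, because this factor combines with the sign $(-1)^{k-1}$ of the $k$-th moment row of $\mathcal L_n$ into $(-1)^{j-k}$, which depends only on the shifted index.
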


\begin{proof}
We first prove \eqref{hff2m}. Using \eqref{pq11},
\eqref{LL2ma}, and the convention \(A_r=0_q\) for \(r>n\), we see
that the first block column of
\(\Ab_{2m}\mathcal L_{2m}^*\) coincides with the first block column
of \(\bigl(H_f^{(2m)}\bigr)^*\).

The remaining block columns of \(\mathcal L_{2m}^*\) have the shifted
structure, with alternating signs and inserted identity blocks,
displayed in \eqref{LL2ma}. 

Multiplying these columns by the block lower triangular matrix
\(\mathcal A_{2m}\), and successively using the identities contained
in \eqref{pq11}, each block column is transformed into the
corresponding block column of
\((H_f^{(2m)})^*\).
We have
\[
\begin{pmatrix}I_q\\A_2^*\\A_4^*\\\vdots\end{pmatrix},
\qquad
\begin{pmatrix}0_q\\A_1^*\\A_3^*\\\vdots\end{pmatrix},
\qquad
\begin{pmatrix}0_q\\I_q\\A_2^*\\\vdots\end{pmatrix},
\qquad\ldots,
\]
where the terminal zero blocks arise from the above convention. These
are precisely the successive block columns of
\(\bigl(H_f^{(2m)}\bigr)^*\). Hence, \eqref{hff2m} holds.

Equality \eqref{hff2m1} is proved analogously, using
\eqref{pq22} and \eqref{LL2mp1a}.

To prove \eqref{hff2mA}, we use \eqref{eq7.4C}. We obtain
\begin{align*}
\bigl(H_f^{(2m)}\bigr)^*
&=
\Ab_{2m}\mathcal L_{2m}^* \\
&=
\Ab_{2m}
\left(\mathcal T_{2m}\mathcal L_{2m}\right)^*
\mathcal T_{2m}\\
&=
\Ab_{2m}
\begin{pmatrix}
I_{qm} & 0_{qm}\\
S^{[0,m-1]} & H_{2,m-1}
\end{pmatrix}^*
\mathcal T_{2m}.
\end{align*}
Taking determinants on both sides of the last equality, we obtain
\begin{align*}
\det\bigl(\bigl(H_f^{(2m)}\bigr)^*\bigr)
&=
\det\Ab_{2m}\,
\det\begin{pmatrix}
I_{qm} & 0_{qm}\\
S^{[0,m-1]} & H_{2,m-1}
\end{pmatrix}^*
\det\mathcal T_{2m}.
\end{align*}
Since \(H_{2,m-1}\) is Hermitian,
\[
\det
\begin{pmatrix}
I_{qm} & 0_{qm}\\
S^{[0,m-1]} & H_{2,m-1}
\end{pmatrix}^*
=
\det H_{2,m-1}.
\]
Moreover, by \eqref{AA-11} and \eqref{eq7.4},
\[
\det\Ab_{2m}\det\mathcal T_{2m}=1.
\]
Therefore, \eqref{hff2mA} follows.

Similarly, equality \eqref{hff2m1A} follows by using
\eqref{AA-11} and \eqref{eq7.4}, together with
\[
\det\Ab_{2m+1}\det\mathcal T_{2m+1}=1
\]
and
\[
\det
\begin{pmatrix}
I_{qm} & 0_{qm\times q(m+1)}\\
S^{[0,m]}\Lambda_{2,m} & H_{1,m}
\end{pmatrix}^*
=
\det H_{1,m},
\]
since \(H_{1,m}\) is Hermitian.
\end{proof}

The following remarks relate Theorem~\ref{th2.13} to the classical
scalar theory and illustrate some features of the matrix case.

\begin{remark}\label{rem3.11}
For \(n=2m\), identity \eqref{hff2m} appears in
\cite[Equality~(16)]{jaro} and in the second equality of
\cite[Formula~(36)]{jury}.

For \(n=2m+1\), the corresponding identity in
\cite[Equality~(5)]{jaro} 
is different from
 \eqref{hff2m1}. Likewise,
\cite[Formula~(36)]{jury} replaces \eqref{hff2m1} by a related formula
based on a different normalization.

The odd-degree formulation in
\cite{jaro,jury} is based on the asymptotic expansion
\[
\frac{g_{2m+1}(-z)}{h_{2m+1}(-z)}
=
\widetilde s_{-1}
-
\frac{\widetilde s_0}{z}
-
\frac{\widetilde s_1}{z^2}
-\cdots,
\]
which involves the additional Markov parameter
\(\widetilde s_{-1}\); see also
\cite[Equality~(1.5)]{ab2018}. The corresponding matrix-valued finite
continued fraction is given in
\cite[Equation~(2.6)]{abH}.
\end{remark}

\begin{remark}\label{rem7.8}
For the HTM polynomial \(\mathbf f_4\) in
Example~\ref{exa1}, the leading block Hurwitz determinants satisfy
$
\Delta_1=\Delta_2=\Delta_3=\Delta_4=1,
$
where
\[
\Delta_1=\det A_1,\quad
\Delta_2=
\det
\begin{pmatrix}
A_1&A_3\\
I_2&A_2
\end{pmatrix},\quad 
\Delta_3=
\det
\begin{pmatrix}
A_1&A_3&0_2\\
I_2&A_2&A_4\\
0_2&A_1&A_3
\end{pmatrix},
\]
and
\[
\Delta_4=
\det
\begin{pmatrix}
A_1&A_3&0_2&0_2\\
I_2&A_2&A_4&0_2\\
0_2&A_1&A_3&0_2\\
0_2&I_2&A_2&A_4
\end{pmatrix}.
\]
Hence all leading block Hurwitz determinants are positive.
On the other hand, Example~\ref{exa1} shows that
inequality~\eqref{A27} holds. Therefore, even within the class of HTM
polynomials, positivity of the leading block Hurwitz determinants does
not imply positivity of the determinants of all coefficient matrices.
\end{remark}

\end{document}